\theoremstyle{plain}
\newtheorem{theorem}{Theorem}[section]
\newtheorem{corollary}[theorem]{Corollary}
\newtheorem{proposition}[theorem]{Proposition}
\theoremstyle{definition}
\newtheorem{definition}[theorem]{Definition}
\newtheorem{example}[theorem]{Example}
\newtheorem{question}{Question}
\theoremstyle{remark}
\newtheorem*{remark}{Remark}
\newtheorem*{notation}{Notation}
\newcommand{\bN}{\mathbb{N}}
\newcommand{\N}{\bN}
\newcommand{\bZ}{\mathbb{Z}}
\newcommand{\Z}{\bZ}
\newcommand{\cA}{\mathcal{A}}
\newcommand{\cC}{\mathcal{C}}
\newcommand{\cI}{\mathcal{I}}
\newcommand{\I}{\cI}
\newcommand{\cJ}{\mathcal{J}}
\newcommand{\J}{\cJ}
\newcommand{\cP}{\mathcal{P}}
\newcommand{\cZ}{\mathcal{Z}}
\newcommand{\continuum}{\mathfrak{c}}
\newcommand{\fin}{\mathrm{Fin}}
\begin{document}


\title{Densities for sets of natural numbers vanishing on a given family}

\author{Rafa\l{} Filip\'{o}w}
\address[R.~Filip\'{o}w]{Institute of Mathematics\\ Faculty of Mathematics, Physics and Informatics\\ University of Gda\'{n}sk\\ ul. Wita Stwosza 57\\ 80-308 Gda\'{n}sk\\ Poland}
\email{Rafal.Filipow@ug.edu.pl}
\urladdr{http://mat.ug.edu.pl/~rfilipow}

\author{Jacek Tryba}
\address[J.~Tryba]{Institute of Mathematics\\ Faculty of Mathematics, Physics and Informatics\\ University of Gda\'{n}sk\\ ul. Wita Stwosza 57\\ 80-308 Gda\'{n}sk\\ Poland}
\email{Jacek.Tryba@ug.edu.pl}
\thanks{The second author has been supported by the grant MNiD-539-5100-B348-19.}

\date{\today}

\subjclass[2010]{Primary:
11B05. 
Secondary:
03E05. 
}

\keywords{%
density of sets of integers,
abstract upper density,
submeasure,
ideal of sets,
Baire property,
maximal almost disjoint family,
ideal convergence.
}


\begin{abstract}
Abstract upper densities are monotone and subadditive functions from the power set of positive integers into the unit real interval that generalize the upper densities used in number theory, including the upper asymptotic density, the upper Banach density, and the upper logarithmic density.

At the open problem session of the Workshop ``Densities and their application'', held at St.~\'{E}tienne in July 2013, G.~Grekos asked a question whether there is 
a ``nice'' abstract upper density, whose the family of null sets is precisely a given ideal of subsets of $\N$,
where ``nice'' would mean the properties of the familiar densities consider in number theory.

In 2018, M.~Di Nasso and R.~Jin (Acta Arith. 185 (2018), no. 4) 
showed that the answer is positive for 
the summable ideals (for instance, the family of finite sets and the family of sequences whose series of reciprocals converge) when ``nice'' density means translation invariant and rich density (i.e.~density which is \emph{onto} the unit interval).

In this paper we extend their result to all ideals with the Baire property. 
\end{abstract}


\maketitle


\setcounter{tocdepth}{1}
\tableofcontents


\section{Abstract densities and ideals}
\label{sec:AbstractDensitiesIdeals}

\begin{definition}
\label{def:abstract-upper-density}
An \emph{abstract upper density} on $\N$ is a function $\delta:\cP(\N)\to[0,1]$ that satisfies the following properties:
\begin{enumerate}
\item $\delta(\N)=1$,\label{def:abstract-upper-density-normed}
\item if $F\subseteq\N$ is finite then $\delta(F)=0$,\label{def:abstract-upper-density-vanishes-on-FIN}
\item if $A\subseteq B$ then  $\delta(A)\leq\delta(B)$,\label{def:abstract-upper-density-monotone}
\item $\delta(A\cup B)\leq \delta(A)+\delta(B)$.\label{def:abstract-upper-density-subadditive}
\end{enumerate}
\end{definition}

\begin{notation}
For $A\subseteq \N$ and $k\in \Z$ we write $A+k$ to denote the set $(A+k)\cap \N$. 
\end{notation}

\begin{definition}
An abstract upper density $\delta$ is 
\begin{enumerate}
\item \emph{translation invariant} if $\delta(A+k)=\delta(A)$ for every $A\subseteq\N$ and $k\in \Z$;
\item \emph{rich}  if for every $r\in [0,1]$ there is $A\subseteq \N$ with $\delta(A)=r$.
\end{enumerate}
\end{definition}

\begin{definition}
An \emph{ideal on $\N$} (in short \emph{ideal}) is a  family $\I\subseteq\cP(\N)$ that satisfies the following properties:
\begin{enumerate}
\item if $A,B\in \I$ then $A\cup B\in\I$,
\item if $A\subseteq B$ and $B\in\I$ then $A\in\I$,
\item $\I$ contains all finite subsets of $\N$,
\item $\N\notin\I$.
\end{enumerate}
An ideal $\I$ is \emph{translation invariant} if $A+k\in \I$ for every $A\in \I$ and $k\in \Z$.
For an ideal $\I$, 
we write $\I^+=\{A\subseteq \N: A\notin\I\}$ and call it the \emph{coideal of $\I$}, 
and we write $\I^*=\{A\subseteq \N: \N\setminus A\in\I\}$ and call it the \emph{dual filter of $\I$}.
\end{definition}

\begin{remark}
We also consider ideals on any infinite countable set by identifying this set with $\N$ via a fixed bijection.
\end{remark}

\begin{example}\
\begin{enumerate}
\item  The family $\fin = \{A\subseteq\N: A\text{ is finite}\}$ is a translation invariant  ideal.
\item Let $f:\N\to[0,\infty)$ be a~non-increasing function such that $\sum_{n=1}^\infty f(n)=\infty$. 
The family $\I_f = \{A\subseteq\N: \sum_{n\in A}f(n)<\infty\}$
is a translation invariant  ideal. We call it the \emph{summable ideal} determined by $f$.
\item The family $\I_d=\{A\subseteq\N: \limsup_{n\to\infty}|A\cap \{1,\dots,n\}|/n=0\}$ of all sets of asymptotic density zero 
is a translation invariant  ideal. We call it the \emph{asymptotic density zero ideal}.
\end{enumerate}
\end{example}

\begin{example}\
\label{exm:denisties-ideals}
\begin{enumerate}
\item Let $\delta$ be an abstract upper density. The family 
$\cZ_\delta=\{A\subseteq\N: \delta(A)=0\}$ is an ideal.
If $\delta$ is translation invariant, so is the ideal $\cZ_\delta$.
\item Let $\I$ be an ideal.
The function $\delta:\cP(\N)\to[0,1]$ given by 
$$\delta(A) = \begin{cases}
0&\text{if $A\in \I$},\\
1 & \text{otherwise}
\end{cases}$$
is an abstract upper density and $\I=\cZ_\delta$. 
If $\I$ is translation invariant, so is $\delta$.\label{exm:denisties-ideals:always-exists}
\end{enumerate}
\end{example}

An ideal $\I$ is \emph{maximal} if $\I\subseteq\J$ implies $\I=\J$ for every ideal $\J$ (i.e.~$\I$ is a maximal element with respect to inclusion among all ideals).
It is known that an ideal $\I$ is maximal 
if and only if $A\in \I$ or $\N\setminus A\in \I$ for every $A\subseteq\N$
(see e.g.~\cite[Lemma~7.4, p.~74]{MR1940513}).

A translation invariant  ideal $\I$ is \emph{T-maximal} if $\I\subseteq\J$ implies $\I=\J$ for every translation invariant ideal $\J$ (i.e.~$\I$ is a maximal element with respect to inclusion among all translation invariant ideals).

\begin{remark}
By identifying sets of natural numbers with their characteristic functions,
we equip $\cP(\N)$ with the topology of the Cantor space $\{0,1\}^\N$ and therefore
we can assign topological complexity to ideals on $\N$.
In particular, an ideal $\I$ has the Baire property if $\I$ has the Baire property as a subset of the Cantor space. (Recall that a subset $B$ of a topological space $X$ has the \emph{Baire property} if there is an open set $U\subseteq X$ such that the set $(B\setminus U)\cup(U\setminus B)$ is of first category.)
\end{remark}

\begin{theorem}[{Sierpi\'{n}ski, see e.g.~\cite[Theorem~4.1.1]{MR1350295}}]
\label{thm:BP-ideal-iff-meager}
Let $\I$ be an ideal on $\N$.
Then $\I$ is of first category or $\I$ does not have the Baire property.
\end{theorem}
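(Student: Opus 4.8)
The plan is to prove the contrapositive: if $\I$ has the Baire property, then $\I$ is of first category. So assume $\I$ has the Baire property as a subset of the Cantor space $\{0,1\}^\N\cong\cP(\N)$, and suppose toward a contradiction that $\I$ is not of first category. Using the Baire property, fix an open set $U\subseteq\cP(\N)$ with $(\I\setminus U)\cup(U\setminus\I)$ meager; since $\I$ is non-meager, $U$ is nonempty, so $U$ contains some nonempty basic clopen set $[s]$, where $s\colon\{1,\dots,m\}\to\{0,1\}$ is a finite partial characteristic function and $[s]=\{A\subseteq\N:A\cap\{1,\dots,m\}=s^{-1}(1)\}$. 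From $[s]\setminus\I\subseteq U\setminus\I\subseteq(\I\setminus U)\cup(U\setminus\I)$ it follows that $[s]\setminus\I$ is meager, i.e.\ $\I$ is comeager in $[s]$.

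The key step is to spread this comeagerness to every basic clopen set of the same level $m$. Given another $t\colon\{1,\dots,m\}\to\{0,1\}$, put $F=s^{-1}(1)\triangle t^{-1}(1)\subseteq\{1,\dots,m\}$ and consider $h\colon\cP(\N)\to\cP(\N)$ given by $h(A)=A\triangle F$. Since $h$ merely flips finitely many coordinates, it is a homeomorphism of the Cantor space, and it maps $[s]$ onto $[t]$. Moreover $h$ maps $\I$ onto itself: if $A\in\I$ then $A\triangle F\subseteq A\cup F\in\I$ because $\I$ is an ideal containing the finite set $F$, and the same observation applied to $A\triangle F$ yields the reverse inclusion. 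Hence $\I$ is comeager in $[t]=h([s])$. As $t$ ranges over all $2^m$ maps $\{1,\dots,m\}\to\{0,1\}$, the sets $[t]$ form a finite clopen partition of $\cP(\N)$, and $\I$ is comeager in each piece; therefore $\I$ is comeager in $\cP(\N)$.

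It remains to contradict the properness of $\I$ (i.e.\ $\N\notin\I$). The complementation map $c(A)=\N\setminus A$ is also a homeomorphism of the Cantor space, so $c(\I)=\I^*$ is comeager as well. But $\I\cap\I^*=\emptyset$, since $A\in\I\cap\I^*$ would give $\N=A\cup(\N\setminus A)\in\I$. Two comeager subsets of the nonempty Baire space $\cP(\N)$ cannot be disjoint, and this contradiction shows that $\I$ must be of first category.

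I expect the only genuinely non-routine point to be the middle paragraph: noticing that symmetric difference with a finite set is a homeomorphism of $\{0,1\}^\N$ under which every ideal containing the finite sets is invariant, so that non-meagerness at a single basic clopen set forces comeagerness everywhere. The rest is routine bookkeeping with the definitions of meager set, Baire property, and dual filter, together with the Baire category theorem. (Equivalently, one can package the first two paragraphs as the topological $0$--$1$ law for tail sets, since $\I$, being closed under finite modifications, is a tail subset of $\{0,1\}^\N$.)
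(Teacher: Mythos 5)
Your proof is correct. Note that the paper does not prove this statement at all---it is quoted as a classical result of Sierpi\'{n}ski with a reference to the literature---so there is no in-paper argument to compare against. What you give is the standard proof: since an ideal contains all finite sets and is closed under subsets and finite unions, it is invariant under the homeomorphisms $A\mapsto A\bigtriangleup F$ for finite $F$ (i.e.\ it is a tail set), so by the topological zero--one law it is either meager or comeager; and comeagerness is ruled out by applying the complementation homeomorphism $A\mapsto\N\setminus A$, which would make $\I$ and the disjoint set $\I^*$ both comeager, contradicting the Baire category theorem. All the individual steps check out, including the transfer of comeagerness from one basic clopen set $[s]$ to every $[t]$ of the same level and the observation that comeagerness on each piece of a finite clopen partition gives comeagerness globally.
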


\begin{theorem}[{Talagrand~\cite[Th\'{e}or\`{e}me 21]{MR579439} (see also \cite[Theorem 4.1.2]{MR1350295})}]
\label{thm:talagrand-characterization}
An ideal $\I$ on $\N$ has the Baire property if and only if there is an increasing sequence $k_1<k_2<\dots$ such that 
if there are infinitely many $n$ with $[k_n,k_{n+1})\cap\N\subseteq A$, then $A\in \I^+$.
\end{theorem}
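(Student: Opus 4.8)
The plan is to treat the two implications separately; the forward direction is a short topological observation, and the converse is where the real work lies. For the forward direction, suppose an increasing sequence $k_1<k_2<\cdots$ with the stated property is given. Identifying subsets of $\N$ with their characteristic functions, let $G$ be the set of all $A\subseteq\N$ that contain $[k_n,k_{n+1})\cap\N$ for infinitely many $n$, so that $G=\bigcap_{m}\bigcup_{n\ge m}\{A:[k_n,k_{n+1})\cap\N\subseteq A\}$. Each inner set is clopen, so $G$ is a $G_\delta$; it is also dense, since any basic clopen condition can be extended by declaring some far-away block $[k_n,k_{n+1})$ to lie in $A$. Hence $G$ is comeager, and by hypothesis $G\subseteq\I^+$, i.e.\ $\I\subseteq\cP(\N)\setminus G$. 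The latter is of first category, so $\I$ is of first category and therefore has the Baire property.

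For the converse, assume $\I$ has the Baire property. By Theorem~\ref{thm:BP-ideal-iff-meager} it is of first category, and hence so is its dual filter $\I^*$, because coordinatewise complementation $A\mapsto\N\setminus A$ is a self-homeomorphism of $\cP(\N)$ carrying $\I$ onto $\I^*$. Now I would invoke the standard combinatorial description of meager subsets of the Cantor space: every meager $M\subseteq\cP(\N)$ is contained in a set $\{A:\chi_A\restriction I_n\ne\sigma_n\text{ for all but finitely many }n\}$ for some partition of $\N$ into consecutive finite intervals $I_n=[k_n,k_{n+1})\cap\N$ $(n\in\N)$ and some patterns $\sigma_n\in\{0,1\}^{I_n}$ (here $\chi_A$ is the characteristic function of $A$). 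Apply this with $M=\I^*$. Since $\N\in\I^*$ and $\chi_\N$ is constantly $1$, for all but finitely many $n$ the pattern $\sigma_n$ is not identically $1$ on $I_n$; merging finitely many intervals if necessary, I may assume $\sigma_n$ is not identically $1$ for every $n$. Put $Z_n=\{i\in I_n:\sigma_n(i)=0\}$, a nonempty subset of $I_n$, and $W=\N\setminus\bigcup_n Z_n=\bigcup_n\{i\in I_n:\sigma_n(i)=1\}$.

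The decisive step uses that $\I^*$, being a filter, is closed under supersets. Given $A\in\I^*$, also $A\cup W\in\I^*$, so $\chi_{A\cup W}\restriction I_n\ne\sigma_n$ for all but finitely many $n$. For such an $n$, $A\cup W$ contains every point of $I_n$ at which $\sigma_n$ equals $1$ (those points lie in $W$), so the only way to get $\chi_{A\cup W}\restriction I_n=\sigma_n$ would be $(A\cup W)\cap Z_n=\emptyset$; since $W\cap Z_n=\emptyset$, that amounts to $A\cap Z_n=\emptyset$. Hence $A\cap Z_n\ne\emptyset$, and in particular $A\cap I_n\ne\emptyset$, for all but finitely many $n$. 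Finally I would read this off for $\I$ by complementation: if $B\in\I$ then $\N\setminus B\in\I^*$ meets all but finitely many $I_n$, i.e.\ $I_n\not\subseteq B$ for all but finitely many $n$; equivalently, $\{n:[k_n,k_{n+1})\cap\N\subseteq B\}$ is finite for every $B\in\I$, which is the contrapositive of the required statement.

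I expect the converse to be the only nontrivial part, and within it the main external input is the combinatorial characterization of meager subsets of $\cP(\N)$; granting that, the crux is the short filter‑theoretic step in the last paragraph, where enlarging $A$ by the fixed set $W$—legitimate precisely because $\I^*$ is upward closed—converts ``avoid the pattern $\sigma_n$ on the block $I_n$'' into ``meet $Z_n$'', which is then weakened to ``meet $I_n$''. A direct, characterization‑free version of essentially the same bookkeeping is also possible, but it relies on the same filter property (it is what lets one disregard the finite sets that the covering closed sets may contain while $\I^*$ does not).
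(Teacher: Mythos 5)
This theorem is quoted from Talagrand and stated in the paper without proof, so there is no internal argument to compare against; judged on its own, your proposal is correct and is essentially the standard proof found in the cited reference \cite[Theorem 4.1.2]{MR1350295}. The forward direction (the set of $A$ containing infinitely many blocks is a dense $G_\delta$ inside $\I^+$, so $\I$ is meager) is fine, and the converse correctly combines Theorem~\ref{thm:BP-ideal-iff-meager}, the homeomorphism $A\mapsto\N\setminus A$ carrying $\I$ onto $\I^*$, and the upward-closure trick with $W$ that turns ``$\chi_{A\cup W}\restriction I_n\ne\sigma_n$'' into ``$A$ meets $I_n$''. The one external input you should acknowledge explicitly is the interval-and-pattern characterization of meager subsets of $\{0,1\}^\N$ (every meager set is contained in some $\{A:\chi_A\restriction I_n\ne\sigma_n\text{ for all but finitely many }n\}$); this is a genuine theorem requiring its own fusion argument, but it is standard and your use of it (including the merging step to ensure each $\sigma_n$ is not identically $1$) is correct.
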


\begin{example}\ 
\label{example:ideals-with(out)-BP}
\begin{enumerate}
\item 
It is known that the ideals $\fin$, $\I_f$ and $\I_d$ have the Baire property, in fact they are Borel ideals (see e.g.~\cite{MR1711328}).
\item 
Maximal ideals do not have the Baire property (see e.g.~\cite[Theorem~4.1.1]{MR1350295}).
\end{enumerate}
\end{example}

\begin{theorem}[{Plewik \cite[Theorem~1]{MR1122275}}]
\label{thm:plewik-countable-intersection-of-nonBP-ideals}
The intersection of a countable family of ideals without the Baire property is an ideal without the Baire property.
In particular, the intersection of countable family of maximal ideals is an ideal without the Baire property.
\end{theorem}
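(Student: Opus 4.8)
The plan is to reduce everything to Talagrand's characterization (Theorem~\ref{thm:talagrand-characterization}), used in its contrapositive form: an ideal $\J$ \emph{fails} to have the Baire property if and only if for every increasing sequence $k_1<k_2<\dots$ there is a set $A\in\J$ with $[k_n,k_{n+1})\cap\N\subseteq A$ for infinitely many $n$. So let $\I_1,\I_2,\dots$ be ideals without the Baire property; repeating entries if necessary we may assume there are infinitely many of them. First I would check the routine fact that $\I:=\bigcap_{n\in\N}\I_n$ is an ideal (closure under subsets and finite unions is immediate, each $\I_n$ contains $\fin$, and $\N\notin\I_1$), so that Talagrand's criterion applies to $\I$. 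It then suffices to prove: for every increasing sequence $(k_n)$ there is $A\in\I$ containing infinitely many of the blocks $[k_n,k_{n+1})$.

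To this end I would fix such a sequence, write $I_j:=[k_j,k_{j+1})\cap\N$, and for $M\subseteq\N$ set $E(M):=\bigcup_{j\in M}I_j$. The key step is a lemma: for every $n\in\N$ and every infinite $M\subseteq\N$ there is an infinite $M'\subseteq M$ with $E(M')\in\I_n$. To prove it, enumerate $M=\{m_1<m_2<\dots\}$ and apply the contrapositive Talagrand criterion to $\I_n$ with the increasing sequence $k'_i:=k_{m_i}$; since $[k'_i,k'_{i+1})\supseteq I_{m_i}$, this produces $A\in\I_n$ and an infinite $S\subseteq\N$ with $I_{m_i}\subseteq A$ for all $i\in S$, and then $M':=\{m_i:i\in S\}$ works because $\I_n$ is downward closed.

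With the lemma in hand I would diagonalize. Put $M_0=\N$ and recursively use the lemma to choose infinite sets $M_0\supseteq M_1\supseteq M_2\supseteq\dots$ with $E(M_n)\in\I_n$; then pick $p_1<p_2<\dots$ with $p_n\in M_n$, so that $M:=\{p_n:n\in\N\}$ is infinite and $M\setminus\{p_1,\dots,p_{n-1}\}\subseteq M_n$ for every $n$. Consequently, for each $n$ the set $E(M)$ differs from a subset of $E(M_n)\in\I_n$ by a finite set of (finite) blocks, hence $E(M)\in\I_n$; therefore $E(M)\in\bigcap_n\I_n=\I$, while $E(M)$ contains the infinitely many blocks $I_j$ with $j\in M$. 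By the contrapositive of Talagrand's criterion this shows that $\I$ does not have the Baire property. The ``in particular'' assertion is then immediate, since maximal ideals do not have the Baire property (Example~\ref{example:ideals-with(out)-BP}).

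The step I expect to be the main obstacle is the lemma: its content is that the failure of the Baire property for a single $\I_n$, as detected by Talagrand's criterion on a coarsened interval partition, can be witnessed by a set of the special shape $E(M')$ with $M'$ an infinite set of block-indices contained in a prescribed infinite $M$. It is exactly this uniform shape of the witnesses across all $n$ that makes the countable fusion go through; the surrounding steps are standard diagonalization and bookkeeping.
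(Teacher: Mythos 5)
The paper does not actually prove this statement; it is quoted as an external result with a citation to Plewik, so there is no in-paper argument to compare against. Your proposal is a correct and complete proof. The reduction of everything to the contrapositive of Talagrand's characterization (Theorem~\ref{thm:talagrand-characterization}) is sound: the key lemma is right, since for an infinite $M=\{m_1<m_2<\dots\}$ the coarsened blocks $[k_{m_i},k_{m_{i+1}})$ each contain $I_{m_i}$, so the witness $A\in\I_n$ produced by Talagrand's criterion for the coarsened sequence dominates a set of the form $E(M')$ with $M'\subseteq M$ infinite, and downward closure of $\I_n$ does the rest. The fusion step is also correct: $E(M)\setminus E\bigl(M\setminus\{p_1,\dots,p_{n-1}\}\bigr)$ is a finite union of finite intervals, hence lies in $\I_n\supseteq\fin$, so $E(M)\in\I_n$ for every $n$ while still containing infinitely many blocks $I_{p_n}$; since the sequence $(k_n)$ was arbitrary, $\bigcap_n\I_n$ fails Talagrand's criterion. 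This Talagrand-plus-fusion argument is essentially the standard proof of Plewik's theorem, and the ``in particular'' clause follows as you say from Example~\ref{example:ideals-with(out)-BP}. The only cosmetic remark is that for a finite family the repetition trick is unnecessary (a finite intersection of ideals containing a fixed non-BP ideal is handled the same way), but your handling is perfectly valid.
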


\begin{definition}
Let $\I$ be an ideal on $\N$. We say that a family $\cA\subseteq\cP(\N)$ is 
\begin{enumerate}
\item 
\emph{$\I$ almost disjoint family}
(in short \emph{$\I$-AD family}) if
\begin{enumerate}
\item $\cA\subseteq\I^+$,
\item $A\cap B\in \I$ for any distinct $A,B\in \cA$.
\end{enumerate}
\item 
\emph{$\I$ translation almost disjoint family}
(in short \emph{$\I$-TAD family}) if
\begin{enumerate}
\item $\cA\subseteq\I^+$,
\item $A\cap (B+k)\in \I$ for any distinct $A,B\in \cA$ and $k\in \Z$.
\end{enumerate}
\end{enumerate}
\end{definition}


\section{Rich densities}

\begin{theorem}
\label{thm:I-AD-of-cardinality-continuum-implies-abstract-upper-density}
Let $\I$ be an ideal on $\N$.
If there exists an $\I$-AD family of cardinality $\continuum$, then there is a rich abstract upper density $\delta$ such that $\cZ_\delta=\I$.
\end{theorem}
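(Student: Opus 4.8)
The plan is to turn the given almost disjoint family into a density by means of a single supremum formula, after first enlarging it to a \emph{maximal} $\I$-AD family so that it ``sees'' every $\I$-positive set. The Baire property and Talagrand's characterization play no role here; the only input is the existence of an $\I$-AD family of cardinality $\continuum$.

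First I would fix a maximal $\I$-AD family $\cA$ extending the given one; this is routine via Zorn's lemma, since the union of any chain of $\I$-AD families is again an $\I$-AD family. Because $\cA$ contains a family of cardinality $\continuum$ and $\cA\subseteq\cP(\N)$, we have $|\cA|=\continuum$, so there is a surjection $g\colon\cA\to(0,1]$. Then I would define $\delta\colon\cP(\N)\to[0,1]$ by
$$\delta(B)=\sup\{\,g(A):A\in\cA,\ A\cap B\notin\I\,\},\qquad \sup\emptyset:=0 .$$

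Next I would verify the axioms of Definition~\ref{def:abstract-upper-density} together with richness and $\cZ_\delta=\I$. Monotonicity is immediate. For subadditivity the key point is that, since $\I$ is an ideal, for each $A\in\cA$ one has $A\cap(B\cup C)\notin\I$ if and only if $A\cap B\notin\I$ or $A\cap C\notin\I$; hence in fact $\delta(B\cup C)=\max\{\delta(B),\delta(C)\}\le\delta(B)+\delta(C)$. As $A\cap\N=A\notin\I$ for all $A\in\cA$, we get $\delta(\N)=\sup_{A\in\cA}g(A)=1$; and if $B\in\I$ (in particular if $B$ is finite), then $A\cap B\in\I$ for all $A\in\cA$, so $\delta(B)=0$. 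This already shows that $\delta$ is an abstract upper density and that $\I\subseteq\cZ_\delta$. For the reverse inclusion, if $B\notin\I$ then by maximality there is $A\in\cA$ with $A\cap B\notin\I$ --- otherwise $\cA\cup\{B\}$ would be a strictly larger $\I$-AD family (note $B\notin\cA$, since $B\cap B=B\notin\I$) --- so $\delta(B)\ge g(A)>0$; thus $\cZ_\delta=\I$. Finally, for $A\in\cA$ almost disjointness gives $A'\cap A\in\I$ for every $A'\in\cA\setminus\{A\}$, while $A\cap A=A\notin\I$, so $\delta(A)=g(A)$; since $g$ is onto $(0,1]$ and $\delta(\emptyset)=0$, the range of $\delta$ is all of $[0,1]$, i.e.\ $\delta$ is rich.

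The step I expect to be the crux is the inclusion $\cZ_\delta\subseteq\I$: guaranteeing that \emph{every} $\I$-positive set receives positive density is exactly what forces the passage to a maximal almost disjoint family (rather than using the given one as is) and the choice of strictly positive weights $g(A)$. All the remaining verifications reduce to elementary ideal arithmetic.
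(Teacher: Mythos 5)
Your proposal is correct and follows essentially the same route as the paper: extend to a maximal $\I$-AD family, assign weights surjecting onto an interval, and define $\delta(B)$ as the supremum of the weights of members meeting $B$ in an $\I$-positive set. The only cosmetic difference is your use of $(0,1]$ in place of the paper's enumeration of $(0,1)$, and your extra observation that $\delta(B\cup C)=\max\{\delta(B),\delta(C)\}$, which is a slightly stronger form of subadditivity.
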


\begin{proof}
Using Zorn's lemma, it is not difficult to extend $\I$-AD family to a maximal (with respect to the inclusion) $\I$-AD family.
Let $\cA$ be a maximal $\I$-AD family of cardinality $\continuum$.
Let $\cA=\{A_\alpha:\alpha<\continuum\}$ and $(0,1)=\{r_\alpha:\alpha<\continuum\}$.
We define $\delta:\cP(\N)\to[0,1]$ by 
$$\delta(A) = \sup\{r_\alpha: A_\alpha\cap A\in \I^+\}$$
with the convention that $\sup\emptyset=0$.

\smallskip

First, we show that $\delta$ is an abstract upper density.

For every $\alpha<\continuum$ we have $A_\alpha\cap \N=A_\alpha\in \I^+$, so $\delta(\N) = \sup (0,1) = 1$.

For every $\alpha<\continuum$ and a finite set $F\subseteq\N$, we have $A_\alpha\cap F \subseteq F \in \I$, so $\delta(F) = \sup \emptyset  = 0$.

If $A\subseteq B$, then $A_\alpha\cap A\in\I^+$ implies $A_\alpha\cap B\in\I^+$. Thus $\delta(A)\leq \delta(B)$.

If $A_\alpha\cap (A\cup B)\in\I^+$, then 
$A_\alpha\cap A\in\I^+$
or
$A_\alpha\cap B\in\I^+$.
Thus 
$\delta(A\cup B)\leq \delta(A)+\delta(B)$.

\smallskip

Now we show that $\delta$ is rich.

It is enough to show that $\delta(A_\alpha)=r_\alpha$ for every $\alpha<\continuum$.
Since $\cA$ is an $\I$-AD family, $A_\beta\cap A_\alpha\in \I$ for every $\alpha\neq\beta$. On the other hand, $A_\alpha\cap A_\alpha = A_\alpha\in\I^+$.
Thus $\delta(A_\alpha) = \sup\{r_\alpha\}=r_\alpha$.

\smallskip

Finally, we show that $\cZ_\delta=\I$.

If $A\in \I$, then $A_\alpha\cap A \subseteq A\in \I$ for every $\alpha<\continuum$. Thus $\delta(A) = \sup\emptyset=0$.

If $A\notin \I$, then, by maximality of $\cA$, there is $\alpha<\continuum$ with $A_\alpha\cap A\in\I^+$. Thus $\delta(A) \geq r_\alpha>0$.
\end{proof}

\begin{theorem}
\label{thm:rich-abstract-upper-density}
Let $\I$ be an ideal on $\N$ with the Baire property.
There is a rich abstract upper density $\delta$ such that $\cZ_\delta=\I$.
\end{theorem}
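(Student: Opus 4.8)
The plan is to reduce the statement to Theorem~\ref{thm:I-AD-of-cardinality-continuum-implies-abstract-upper-density}: once we exhibit an $\I$-AD family of cardinality $\continuum$, that theorem hands us a rich abstract upper density $\delta$ with $\cZ_\delta=\I$, and we are done. So the whole task is to build such a family out of the Baire property of $\I$.

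For that, I would invoke Talagrand's characterization (Theorem~\ref{thm:talagrand-characterization}). Since $\I$ has the Baire property, fix an increasing sequence $k_1<k_2<\cdots$ such that every $A\subseteq\N$ with $[k_n,k_{n+1})\cap\N\subseteq A$ for infinitely many $n$ lies in $\I^+$. Put $I_n=[k_n,k_{n+1})\cap\N$; these are \emph{pairwise disjoint} finite sets. The role of Talagrand's theorem is exactly to replace the opaque condition ``$A\notin\I$'' by the concrete sufficient condition ``$A$ contains infinitely many of the blocks $I_n$'', while the disjointness of the blocks is what will keep pairwise intersections small.

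Now import the classical fact that there is an almost disjoint family $\{S_x:x<\continuum\}$ of infinite subsets of a countable set $X$, i.e. each $S_x$ is infinite and $S_x\cap S_y$ is finite for $x\neq y$ (for instance, take $X$ to be the set of finite $0$--$1$ sequences and, for each infinite $0$--$1$ sequence $x$, let $S_x$ be the set of its finite initial segments; there are $\continuum$ such $x$, and distinct ones share only finitely many initial segments). Identifying $X$ with the index set $\{n:n\ge 1\}$ and using the family of blocks above, define for each $x<\continuum$
\[
A_x=\bigcup_{n\in S_x}I_n .
\]
Since $I_n\subseteq A_x$ for every $n\in S_x$ and $S_x$ is infinite, $A_x\in\I^+$ by the choice of the sequence $(k_n)$. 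For $x\neq y$, disjointness of the $I_n$ gives $A_x\cap A_y=\bigcup_{n\in S_x\cap S_y}I_n$, a finite union of finite sets, hence finite, hence in $\I$. The sets $A_x$ are pairwise distinct because $S_x\setminus S_y$ is infinite for $x\neq y$. Thus $\{A_x:x<\continuum\}$ is an $\I$-AD family of cardinality $\continuum$, and Theorem~\ref{thm:I-AD-of-cardinality-continuum-implies-abstract-upper-density} completes the argument.

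I do not expect a genuine obstacle here: the only points requiring care are that the blocks $I_n$ must be literally pairwise disjoint (so that $A_x\cap A_y$ collapses to $\bigcup_{n\in S_x\cap S_y}I_n$ and therefore to a finite set), and that one should \emph{cite} rather than reconstruct the existence of an almost disjoint family of size $\continuum$. Everything else — checking that $\delta$ is monotone, subadditive, normalized, vanishes on finite sets, is rich, and has $\cZ_\delta=\I$ — is already done inside the proof of Theorem~\ref{thm:I-AD-of-cardinality-continuum-implies-abstract-upper-density}.
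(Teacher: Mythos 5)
Your proposal is correct, and the overall reduction is exactly the paper's: produce an $\I$-AD family of cardinality $\continuum$ and feed it to Theorem~\ref{thm:I-AD-of-cardinality-continuum-implies-abstract-upper-density}. The difference is in how that family is obtained. The paper simply \emph{cites} the fact that every ideal with the Baire property admits an $\I$-AD family of size $\continuum$ (referring to two external papers), whereas you construct one directly from Talagrand's characterization (Theorem~\ref{thm:talagrand-characterization}): taking the disjoint blocks $I_n=[k_n,k_{n+1})\cap\N$, a classical $\fin$-AD family $\{S_x: x<\continuum\}$, and setting $A_x=\bigcup_{n\in S_x}I_n$, your verification that each $A_x\in\I^+$, that $A_x\cap A_y$ is finite, and that the $A_x$ are pairwise distinct is sound. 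This is in fact the same device the paper itself uses later, in the proof of Theorem~\ref{thm:I-TAD-of-cardinality-continuum}, where the harder translation-invariant version is needed and the blocks $I_{2n}$ are spaced out to absorb shifts; your argument is the untranslated special case of that construction. What your route buys is a self-contained proof of Theorem~\ref{thm:rich-abstract-upper-density} with no external citation; what the paper's route buys is brevity and a pointer to the general AD-family existence result. No gaps.
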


\begin{proof}
Since it is known (see \cite{MR3800755} or \cite{MR3836186}) that there  is an $\I$-AD family of cardinality $\continuum$ for every ideal with the Baire property, Theorem~\ref{thm:I-AD-of-cardinality-continuum-implies-abstract-upper-density} finishes the proof.
\end{proof}

\begin{definition}[{Oliver~\cite{MR2078923}}]
Let $\I$ be an ideal on $\N$.
We say that a partition $\{P_n: n \in \N\}$ of $\N$ is an \emph{$\omega$-partition of $\N$ with respect to $\I$} if
\begin{enumerate}
\item $P_n\not\in\I$ for all $n\in\N$,
\item for every $A\subseteq\N$, if $A\cap P_n\in\I$ for every $n\in\N$, then $A\in\I$.
\end{enumerate}
\end{definition}

\begin{theorem}
\label{thm:omega-partition-implies-rich-density}
If there exists an $\omega$-partition $\{P_n: n \in \N\}$ of $\N$ with respect to $\I$, then  there is a rich abstract upper density $\delta$ with $\cZ_\delta = \I$.
\end{theorem}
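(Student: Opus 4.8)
The plan is to use the $\omega$-partition to exhibit $\I$ as a countable intersection of ideals, each of which already carries an obvious (if crude) density, and then to amalgamate those densities with geometric weights so that binary expansions force richness.

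Concretely, for each $n\in\N$ I would set $\I_n=\{A\subseteq\N:A\cap P_n\in\I\}$. The first step is to check that each $\I_n$ is an ideal — the only nonobvious point being $\N\notin\I_n$, which holds because $\N\cap P_n=P_n\notin\I$ by clause~(1) of the definition of an $\omega$-partition — and that $\I=\bigcap_{n\in\N}\I_n$; here ``$\subseteq$'' is downward closedness of $\I$, and ``$\supseteq$'' is precisely clause~(2) of that definition (if $A\cap P_n\in\I$ for every $n$, then $A\in\I$).

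The second step invokes Example~\ref{exm:denisties-ideals}(\ref{exm:denisties-ideals:always-exists}) to attach to each $\I_n$ the $\{0,1\}$-valued abstract upper density $\delta_n$ with $\cZ_{\delta_n}=\I_n$, and then defines $\delta:\cP(\N)\to[0,1]$ by $\delta(A)=\sum_{n\in\N}2^{-n}\delta_n(A)$. I expect this to be an abstract upper density by a termwise check: $\delta(\N)=\sum_n 2^{-n}=1$, $\delta$ vanishes on finite sets, and both monotonicity and subadditivity pass through the (absolutely convergent, nonnegative) sum. Since the weights are strictly positive, $\delta(A)=0$ holds exactly when every $\delta_n(A)=0$, i.e.\ exactly when $A\in\bigcap_n\I_n=\I$, so $\cZ_\delta=\I$.

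The last step is richness. Given $r\in[0,1]$, choose $S\subseteq\N$ with $r=\sum_{n\in S}2^{-n}$ (a binary expansion of $r$) and put $A_S=\bigcup_{n\in S}P_n$. Pairwise disjointness of the $P_n$ gives $A_S\cap P_n=P_n\notin\I$ for $n\in S$ and $A_S\cap P_n=\emptyset$ otherwise, so $\delta_n(A_S)=1$ iff $n\in S$, whence $\delta(A_S)=\sum_{n\in S}2^{-n}=r$. I do not anticipate a real obstacle: the substance of the argument is the identification of $\I$ with $\bigcap_n\I_n$ — which is exactly what clauses~(1) and~(2) of an $\omega$-partition buy us — after which the density and the verification of its richness essentially write themselves.
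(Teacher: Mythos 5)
Your proposal is correct and is essentially the paper's own argument: the density you build, $\delta(A)=\sum_{n}2^{-n}\delta_n(A)=\sum_{\{n:\,A\cap P_n\notin\I\}}2^{-n}$, is literally the one the paper writes down, and the richness argument via binary expansions and unions of the $P_n$ is identical. Your extra scaffolding (the ideals $\I_n$ and the $\{0,1\}$-valued densities $\delta_n$) just makes explicit the verification the paper leaves as ``easy to see.''
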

\begin{proof}
It is easy to see that 
$$\I = \{A\subseteq\N: A\cap P_n\in \I\text{ for every $n\in\N$}\}$$
and
$$\delta(A) = \sum_{A\cap P_n\notin\I}\frac{1}{2^n}$$
is an abstract upper density with $\cZ_\delta=\I$.

To see that $\delta$ is rich it is enough to notice that for every $r\in (0,1)$ there is $B\subseteq \N$ with $\sum_{n\in B}1/2^n=r$ and then $\delta(\bigcup\{P_n: n\in B\})=r$.
\end{proof}

\begin{theorem}
\label{thm:nonAD-ideal-with-rich-density}
There exists an ideal $\I$ for which there is a rich abstract upper density $\delta$ with $\cZ_\delta = \I$, but there is no $\I$-AD family of cardinality $\continuum$.
\end{theorem}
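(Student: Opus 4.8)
The plan is to build $\I$ as a ``countable sum'' of maximal ideals sitting on the blocks of a fixed partition of $\N$ into infinitely many infinite pieces. Such an ideal will carry an $\omega$-partition essentially by construction, so Theorem~\ref{thm:omega-partition-implies-rich-density} supplies a rich density; on the other hand, the maximality of the ideal on each block forces every $\I$-AD family to be countable.

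Concretely, I would fix a partition $\N=\bigsqcup_{n\in\N}P_n$ with every $P_n$ infinite, and (by the ultrafilter lemma) a maximal ideal $\I_n$ on each $P_n$, and then set
$$\I=\{A\subseteq\N: A\cap P_n\in\I_n\text{ for every }n\in\N\}.$$
The first step is the routine verification that $\I$ is an ideal: closure under unions and under subsets is inherited block by block, $\I$ contains the finite sets because a finite subset of $P_n$ lies in $\I_n$, and $\N\notin\I$ because $P_n=\N\cap P_n\notin\I_n$. The second step is to check that $\{P_n:n\in\N\}$ is an $\omega$-partition of $\N$ with respect to $\I$: one has $P_n\notin\I$ (inspect the $n$-th block), and if $A\cap P_n\in\I$ for all $n$, then in particular $A\cap P_n=(A\cap P_n)\cap P_n\in\I_n$ for all $n$, i.e.\ $A\in\I$. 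By Theorem~\ref{thm:omega-partition-implies-rich-density} there is then a rich abstract upper density $\delta$ with $\cZ_\delta=\I$.

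The third and central step is to show that $\I$ admits no $\I$-AD family of cardinality $\continuum$ --- indeed no uncountable one, which suffices since $\continuum>\aleph_0$. Let $\cA$ be an $\I$-AD family. For each $A\in\cA$ we have $A\in\I^+$, so there is at least one index $n$ with $A\cap P_n\notin\I_n$, i.e.\ $A\cap P_n\in\I_n^+$; collect these indices into a nonempty set $S(A)\subseteq\N$. The key point is that the sets $S(A)$ are pairwise disjoint: if $n\in S(A)\cap S(B)$ for distinct $A,B\in\cA$, then $A\cap P_n$ and $B\cap P_n$ both lie in $\I_n^+$, which is a filter because $\I_n$ is maximal, whence $A\cap B\cap P_n\in\I_n^+$; but $A\cap B\in\I$ gives $A\cap B\cap P_n\in\I_n$, a contradiction. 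Choosing one element of each $S(A)$ therefore produces an injection of $\cA$ into $\N$, so $|\cA|\le\aleph_0<\continuum$.

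I expect the only place that needs genuine care is the disjointness argument in the last step (and keeping straight the identifications ideal/coideal/dual filter for the maximal $\I_n$); everything else is bookkeeping. As a side remark, writing $\I=\bigcap_{n}\{A\subseteq\N:A\cap P_n\in\I_n\}$ exhibits $\I$ as a countable intersection of ideals without the Baire property, so by Theorem~\ref{thm:plewik-countable-intersection-of-nonBP-ideals} this $\I$ itself lacks the Baire property --- consistent with Theorem~\ref{thm:rich-abstract-upper-density} not being a characterization.
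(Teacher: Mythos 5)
Your proposal is correct and follows essentially the same route as the paper: the same block construction $\I=\{A:A\cap P_n\in\I_n\text{ for all }n\}$ from maximal ideals on the pieces of a partition, the same appeal to Theorem~\ref{thm:omega-partition-implies-rich-density} for richness, and the same use of $\I_n^+=\I_n^*$ to rule out uncountable $\I$-AD families. Your disjoint-$S(A)$ packaging of the pigeonhole step is a minor cosmetic variant of the paper's argument, not a different proof.
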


\begin{proof}
Let $\{P_n:n\in\N\}$ be a partition of $\N$ such that $P_n$ is infinite for all $n$.
Let $\I_n$ be a maximal ideal on $P_n$ for every $n\in \N$.
It is easy to see that 
$$\I = \{A\subseteq\N: A\cap P_n\in \I_n\text{ for every $n\in\N$}\}$$
is an ideal on $\N$.
Since $\{P_n:n\in \N\}$ is an $\omega$-partition of $\N$ with respect to $\I$, there is a rich abstract upper density $\delta$ with $\cZ_\delta=\I$ (by Theorem~\ref{thm:omega-partition-implies-rich-density}).

Lastly, we show that there is no $\I$-AD family of cardinality $\continuum$ (in fact we show that there is no uncountable $\I$-AD family).
Let $\cA\subseteq\I^+$ be uncountable. For every $A\in \cA$ there is at least one $n_A\in \N$ with $A\cap P_{n_A}\in \I_{n_A}^+$. Since $\cA$ is uncountable, there are  distinct $A,B\in \cA$ with $n_A=n_B$.
Let $n=n_A$.
Since $\I_n$ is a maximal ideal on $P_n$, 
$\I_n^+=\I_n^*$.
Thus, $(A\cap  P_n)\cap (B\cap P_n)\in \I_n^*$, so $(A\cap B)\cap P_n\in \I_n^*=\I_n^+$.
Hence $A\cap B\notin \I$, so $\cA$ is not an $\I$-AD family.
\end{proof}

\begin{theorem}
\label{thm:AD-nonBP-ideal-with-rich-density}
There exists an ideal $\I$ without the Baire property for which there is $\I$-AD family of cardinality $\continuum$. 
\end{theorem}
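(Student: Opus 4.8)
The plan is to use a partition of $\N$ into infinitely many infinite pieces, to put a maximal ideal on each piece, and --- in contrast with the ideal constructed in the proof of Theorem~\ref{thm:nonAD-ideal-with-rich-density} --- to allow \emph{finitely many} pieces to be exceptional. Concretely, I would fix a partition $\{P_n:n\in\N\}$ of $\N$ into infinite sets, a maximal ideal $\I_n$ on $P_n$ for each $n$, and set
\[
\I=\bigl\{A\subseteq\N:\{n\in\N:A\cap P_n\notin\I_n\}\ \text{is finite}\bigr\}.
\]
That $\I$ is an ideal on $\N$ is routine: $\N\notin\I$ since $\N\cap P_n=P_n\notin\I_n$ for all $n$, while monotonicity, closure under unions, and containment of the finite sets all follow immediately from the corresponding properties of the $\I_n$.

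Next I would produce an $\I$-AD family of cardinality $\continuum$. Fix an almost disjoint family $\{S_\alpha:\alpha<\continuum\}$ of infinite subsets of $\N$ of cardinality $\continuum$ (a classical object) and put $A_\alpha=\bigcup_{n\in S_\alpha}P_n$. Because $A_\alpha\cap P_n$ equals $P_n\notin\I_n$ when $n\in S_\alpha$ and equals $\emptyset$ otherwise, we get $\{n:A_\alpha\cap P_n\notin\I_n\}=S_\alpha$, which is infinite, so $A_\alpha\in\I^+$. For $\alpha\neq\beta$ we have $A_\alpha\cap A_\beta=\bigcup_{n\in S_\alpha\cap S_\beta}P_n$, a finite union of pieces, so $\{n:(A_\alpha\cap A_\beta)\cap P_n\notin\I_n\}=S_\alpha\cap S_\beta$ is finite and hence $A_\alpha\cap A_\beta\in\I$. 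Since $\alpha\mapsto A_\alpha$ is injective ($S_\alpha=\{n:P_n\subseteq A_\alpha\}$), the family $\{A_\alpha:\alpha<\continuum\}$ is an $\I$-AD family of cardinality $\continuum$.

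The crux is to show that $\I$ does not have the Baire property, and I would avoid the direct route through the Talagrand characterisation (Theorem~\ref{thm:talagrand-characterization}), which for this $\I$ leads to unpleasant combinatorics about how finite blocks meet the pieces $P_n$. Instead, the key observation is that $\I$ contains a non-meager subideal. For each $n$ put $\cJ_n=\{A\subseteq\N:A\cap P_n\in\I_n\}$; this is an ideal on $\N$, and it is in fact maximal, since for any $A\subseteq\N$ maximality of $\I_n$ on $P_n$ gives $A\cap P_n\in\I_n$ or $P_n\setminus(A\cap P_n)\in\I_n$, i.e.\ $A\in\cJ_n$ or $\N\setminus A\in\cJ_n$. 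By Plewik's theorem (Theorem~\ref{thm:plewik-countable-intersection-of-nonBP-ideals}, the ``in particular'' clause), the ideal $\K:=\bigcap_{n\in\N}\cJ_n=\{A\subseteq\N:A\cap P_n\in\I_n\ \text{for every}\ n\}$ --- which is exactly the ideal used in the proof of Theorem~\ref{thm:nonAD-ideal-with-rich-density} --- does not have the Baire property, hence is non-meager (meager sets have the Baire property). Since $\K\subseteq\I$ (having no exceptional $n$ certainly means having only finitely many), $\I$ is non-meager too, so by Theorem~\ref{thm:BP-ideal-iff-meager} it does not have the Baire property. Consequently, by Theorem~\ref{thm:I-AD-of-cardinality-continuum-implies-abstract-upper-density}, there is even a rich abstract upper density $\delta$ with $\cZ_\delta=\I$.

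I expect the only genuinely delicate point to be this last one: resisting a head-on attack on the Baire property and instead recognising that the innocuous-looking subideal $\K$ is a countable intersection of maximal ideals, so Plewik's theorem applies directly, followed by the short category bookkeeping (no Baire property $\Rightarrow$ non-meager; a superset of a non-meager set is non-meager; a non-meager ideal has no Baire property). The one substantive check along the way is that each $\cJ_n$ really is maximal --- that is where maximality of $\I_n$ is used and what makes Plewik's theorem available; everything else is bookkeeping.
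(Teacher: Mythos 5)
Your proposal is correct and follows essentially the same route as the paper: the same ideal $\I$ built from maximal ideals $\I_n$ on the pieces $P_n$ with finitely many exceptional indices allowed, the same $\I$-AD family obtained by lifting a classical almost disjoint family via $A_\alpha=\bigcup_{n\in S_\alpha}P_n$, and the same Baire-property argument via the maximal ideals $\cJ_n$, Plewik's theorem applied to $\bigcap_n\cJ_n\subseteq\I$, and Sierpi\'{n}ski's dichotomy. The only cosmetic difference is that the paper writes $\I=\bigcup_{n}\bigcap_{k\geq n}\J_k$ before passing to the subideal $\bigcap_{k\geq 1}\J_k$, which is exactly your $\K$.
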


\begin{proof}
Let $\{P_n:n\in\N\}$ be a partition of $\N$ such that $P_n$ is infinite for all $n$.
Let $\I_n$ be a maximal ideal on $P_n$ for every $n\in \N$.
It is easy to see that 
$$\I = \{A\subseteq\N: A\cap P_n\in \I_n\text{ for all but finitely many $n\in\N$}\}$$
is an ideal on $\N$.

Let $\cA$ be $\fin$-AD family of cardinality $\continuum$ (there is one, see e.g.~\cite[Lemma 9.21]{MR1940513}).
For $A\in \cA$, we define $C_A=\bigcup\{P_n:n\in A\}$.
Then $C_A\notin\I$ for every $A\in \cA$, and for distinct $A,B\in \cA$ we have $C_A\cap C_B=\bigcup\{P_n:n\in A\cap B\}\in \I$.
Thus, $\{C_A:A\in \cA\}$ is $\I$-AD family of cardinality $\continuum$.

Lastly, we show that $\I$ does not have the Baire property.
For every $n\in \N$ we define $\J_n = \{A\subseteq \N: A\cap P_n\in \I_n\}$.
It is easy to see that $\J_n$ is a maximal ideal on $\N$ for every $n\in \N$ and 
$$\I = \bigcup_{n\in\N}\bigcap_{k\geq n} \J_k.$$
By Theorem~\ref{thm:plewik-countable-intersection-of-nonBP-ideals}, 
the ideal $\bigcap_{k\geq n} \J_k$ does not have the Baire property for every $k\in \N$.
In particular, $\bigcap_{k\geq 1} \J_k$ does not have the Baire property, so it is of second category. Thus, $\I$ is of second category as a superset of $\bigcap_{k\geq 1} \J_k$.
Then, by Theorem~\ref{thm:BP-ideal-iff-meager}, $\I$ does not have the Baire property.
\end{proof}

\begin{theorem}
\label{thm:maximl-no-rich-aud}
Let  $\I$ be a maximal ideal.
If $\delta$ is an abstract upper density $\delta$ with $\cZ_\delta=\I$ then $\delta$ takes only two values: zero and one. In particular, there is no rich abstract upper density $\delta$ with $\cZ_\delta=\I$.
\end{theorem}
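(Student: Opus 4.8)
The plan is to reduce everything to the standard characterization of maximality recalled earlier in the excerpt: an ideal $\I$ is maximal if and only if $A\in\I$ or $\N\setminus A\in\I$ for every $A\subseteq\N$. Assuming we are given an abstract upper density $\delta$ with $\cZ_\delta=\I$, I would fix an arbitrary $A\subseteq\N$ and split into the two cases provided by this characterization.

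In the first case, $A\in\I=\cZ_\delta$, so $\delta(A)=0$ by definition of $\cZ_\delta$. In the second case, $\N\setminus A\in\I=\cZ_\delta$, so $\delta(\N\setminus A)=0$; then, combining subadditivity (Definition~\ref{def:abstract-upper-density}\eqref{def:abstract-upper-density-subadditive}) applied to the pair $A$, $\N\setminus A$ with the normalization $\delta(\N)=1$, I get $1=\delta(\N)=\delta(A\cup(\N\setminus A))\leq\delta(A)+\delta(\N\setminus A)=\delta(A)$, and since $\delta(A)\leq 1$ always, this forces $\delta(A)=1$. Hence $\delta(A)\in\{0,1\}$ for every $A\subseteq\N$, which is the first assertion.

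For the ``in particular'' clause, since the range of $\delta$ is contained in $\{0,1\}$ while $[0,1]$ contains other points (e.g.\ $\tfrac12$), there is $r\in[0,1]$ with no $A\subseteq\N$ satisfying $\delta(A)=r$, so $\delta$ is not rich.

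I do not expect a genuine obstacle here: the argument uses only monotonicity-free facts, namely finite subadditivity and normalization, together with the cited equivalence for maximal ideals. The only point deserving a line of care is making explicit that $\cZ_\delta=\I$ is used in \emph{both} directions — ``$A\in\I\Rightarrow\delta(A)=0$'' in the first case and the same implication applied to $\N\setminus A$ in the second — but this is immediate from the definition of $\cZ_\delta$.
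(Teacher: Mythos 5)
Your proposal is correct and follows essentially the same argument as the paper: case-split on $A\in\I$ versus $\N\setminus A\in\I$ using the characterization of maximality, then apply subadditivity and $\delta(\N)=1$ to force $\delta(A)=1$ in the second case. No issues.
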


\begin{proof}
Let $A\subseteq\N$. 
If $A\in \I$ then $\delta(A)=0$. 
If $A\notin\I$, then $\N\setminus A\in \I$, hence 
$1=\delta(\N)\leq \delta(A)+\delta(\N\setminus A) = \delta(A)+0=\delta(A)\leq 1$, thus $\delta(A)=1$.
\end{proof}

\begin{theorem}
\label{thm:non-maximl-non-BP-no-rich-aud}
There exists a non-maximal ideal $\I$ 
for which there is no rich abstract upper density $\delta$ with $\cZ_\delta = \I$.
\end{theorem}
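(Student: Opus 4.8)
The plan is to build the desired ideal as the intersection of two distinct maximal ideals, and to show directly that any abstract upper density vanishing exactly on it can take only finitely many values. Recall that maximal ideals fail to have the Baire property (Example~\ref{example:ideals-with(out)-BP}), so by Theorem~\ref{thm:rich-abstract-upper-density} the example we are after must itself lack the Baire property; taking intersections of maximal ideals is therefore natural (cf.\ Theorem~\ref{thm:plewik-countable-intersection-of-nonBP-ideals}), and it turns out two maximal ideals already suffice.

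Concretely, fix two distinct maximal ideals $\I_1,\I_2$ on $\N$ and put $\I=\I_1\cap\I_2$. This is an ideal, and I would first check it is \emph{not} maximal: pick $A_0\in\I_1\setminus\I_2$ (possible since $\I_1\neq\I_2$; otherwise swap the roles of $\I_1$ and $\I_2$) and set $A_1=\N\setminus A_0$. Then $A_0\notin\I_2\supseteq\I$ and, since $\I_1$ is proper and $A_0\in\I_1$, also $A_1=\N\setminus A_0\notin\I_1\supseteq\I$; so neither $A_0$ nor its complement lies in $\I$. Note too that $A_0\in\I_1\setminus\I_2$ and $A_1\in\I_2\setminus\I_1$, and that by maximality of $\I_2$ (resp.\ $\I_1$) the trace $\{B\subseteq A_0:B\in\I_2\}$ is a maximal ideal on $A_0$ (resp.\ $\{B\subseteq A_1:B\in\I_1\}$ is a maximal ideal on $A_1$).

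Now suppose $\delta$ is an abstract upper density with $\cZ_\delta=\I$; I claim its range is contained in $\{0,\delta(A_0),\delta(A_1),1\}$, which already precludes richness. The key step is: for every $B\subseteq A_0$ one has $\delta(B)\in\{0,\delta(A_0)\}$ (and symmetrically for $B\subseteq A_1$, swapping the roles of $\I_1,\I_2$ and of $A_0,A_1$). Indeed, if $\delta(B)>0$ then $B\notin\I$; as $B\subseteq A_0\in\I_1$ forces $B\in\I_1$, this gives $B\notin\I_2$, hence $A_0\setminus B\in\I_2$ by maximality of the trace on $A_0$, and also $A_0\setminus B\in\I_1$, so $A_0\setminus B\in\I$ and $\delta(A_0\setminus B)=0$; monotonicity and subadditivity then yield $\delta(A_0)\le\delta(B)+\delta(A_0\setminus B)=\delta(B)\le\delta(A_0)$. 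With this in hand, for arbitrary $C\subseteq\N$ write $C_0=C\cap A_0$ and $C_1=C\cap A_1$, so that $\delta(C_0)\in\{0,\delta(A_0)\}$ and $\delta(C_1)\in\{0,\delta(A_1)\}$, and split into cases. If both are $0$, then $C_0,C_1\in\I$, so $C\in\I$ and $\delta(C)=0$. If exactly one is nonzero, monotonicity and subadditivity squeeze $\delta(C)$ between $\delta(C_i)$ and $\delta(C_0)+\delta(C_1)$, forcing $\delta(C)$ to equal $\delta(A_0)$ or $\delta(A_1)$. If both are nonzero, the argument from the key step applied to $C_0\subseteq A_0$ and $C_1\subseteq A_1$ shows $A_0\setminus C_0\in\I$ and $A_1\setminus C_1\in\I$, whence $\N\setminus C=(A_0\setminus C_0)\cup(A_1\setminus C_1)\in\I$, so $\delta(\N\setminus C)=0$ and $\delta(C)\ge\delta(\N)-\delta(\N\setminus C)=1$. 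Hence $\delta$ takes at most four values and cannot be rich.

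I expect the only delicate point to be the ``both nonzero'' case: there, subadditivity alone only confines $\delta(C)$ to the interval $[\max\{\delta(A_0),\delta(A_1)\},\,\delta(A_0)+\delta(A_1)]$, and it is the observation that $\N\setminus C$ then lands in $\I$ that collapses this to the single value $1$ and keeps the range finite. Everything else is routine manipulation of ideals, parallel in spirit to the proof of Theorem~\ref{thm:maximl-no-rich-aud}.
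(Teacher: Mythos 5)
Your proof is correct and is essentially the paper's argument in different clothing: your $\I_1\cap\I_2$ coincides with the paper's ideal $\{A: A\cap P_1\in\J_1,\ A\cap P_2\in\J_2\}$ for the partition $P_1=A_0$, $P_2=A_1$ with $\J_1,\J_2$ the (maximal) traces of $\I_2,\I_1$ on those pieces, and your key step ($\delta(B)\in\{0,\delta(A_i)\}$ for $B\subseteq A_i$, via $A_i\setminus B\in\I$) is exactly the paper's. The only cosmetic difference is the conclusion: the paper stops at $\delta(A)\geq\min_i\delta(P_i)>0$ for $A\notin\I$, while you go further and pin the range down to four values.
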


\begin{proof}
Let $N\geq 2$.
Let $\{P_n:n\leq N\}$ be a partition of $\N$ such that $P_n$ is infinite for all $n\leq N$.
Let $\I_n$ be a maximal ideal on $P_n$ for every $n\in \N$.
It is easy to see that 
$$\I = \{A\subseteq\N: A\cap P_n\in \I_n\text{ for every $n\leq N$}\}$$
is an ideal on $\N$. Since both $P_1\notin \I$ and $\N\setminus P_1\notin \I$, $\I$ is not a maximal ideal.

Let $\delta$ be any abstract upper density such that $\cZ_\delta=\I$.

Let $n\leq N$ and $A\subseteq P_n$.
If $A\in \I_n$ then $\delta(A)=0$. 
If $A\notin\I_n$, then $P_n \setminus A\in \I_n$, hence 
$\delta(P_n)\leq \delta(A)+\delta(P_n\setminus A) = \delta(A)+0=\delta(A)\leq \delta(P_n)$, hence $\delta(A)=\delta(P_n)$.

Let $A\notin\I$. There is at least one $n\leq N$ with $A\cap P_n\notin\I_n$, so $\delta(A)\geq \delta(A\cap P_n) = \delta(P_n)$.

Thus, $\delta(A)\geq \min\{\delta(P_n):n\leq N\}>0$ for every $A\notin\I$, hence $\delta$ is not rich.
\end{proof}

\bigskip

The following diagram summarizes all known relationships between ideals $\I$ and existence of rich abstract upper density with $\I=\cZ_\delta$. 

\begin{center}
\begin{tikzpicture}
\draw (-6,-2.5) rectangle (6,2.5);

\draw[ultra thick] (0.05,-2.5) -- (0.05,3.1);
\node at (-3,2.8) {\textsc{There is a rich density}};
\node at (3.1,2.8) {\textsc{There is no rich density}};

\draw[thick]  (-3.5,0) circle (2.4cm);
\node at (-4.5,-0.9) {AD-ideals};
\node at (-4.5,-1.3) {(Thm.~\ref{thm:I-AD-of-cardinality-continuum-implies-abstract-upper-density})};

\draw[thick]  (-3.1,0.7) circle (1.5cm);
\node at (-3,1.5) {Ideals with };
\node at (-3,1) {Baire property};
\node at (-3,0.5) {(Thm.~\ref{thm:rich-abstract-upper-density})};

\draw[fill] (-2.5,-1.1) circle (0.08cm);
\node at (-2.5,-1.4) {(Thm.~\ref{thm:AD-nonBP-ideal-with-rich-density})};

\draw[fill] (-1,-1.7) circle (0.08cm);
\node at (-1,-2) {(Thm.~\ref{thm:nonAD-ideal-with-rich-density})};

\draw[thick]  (3,0.7) circle (1.5cm);
\node at (3,1.5) {Maximal};
\node at (3,1) {ideals};
\node at (3,0.5) {(Thm.~\ref{thm:maximl-no-rich-aud})};

\draw[fill] (3,-1.7) circle (0.08cm);
\node at (3,-2) {(Thm.~\ref{thm:non-maximl-non-BP-no-rich-aud})};

\end{tikzpicture}
\end{center}


\section{Rich and translation  invariant densities}

\begin{theorem}
\label{thm:I-TAD-of-cardinality-continuum-implies-abstract-upper-density}
Let $\I$ be a translation invariant ideal on $\N$.
If there exists an $\I$-TAD family of cardinality $\continuum$, then there is a translation invariant and rich abstract upper density $\delta$ such that $\cZ_\delta=\I$.
\end{theorem}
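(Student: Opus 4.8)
The plan is to follow the proof of Theorem~\ref{thm:I-AD-of-cardinality-continuum-implies-abstract-upper-density}, replacing each test set $A_\alpha\cap A$ by the family of all its translates so as to force translation invariance. First I would use Zorn's lemma to extend the given $\I$-TAD family to a maximal one $\cA$ (a union of a chain of $\I$-TAD families is again an $\I$-TAD family), and note that $\cA$ has cardinality $\continuum$ (it is a subfamily of $\cP(\N)$ containing an $\I$-TAD family of size $\continuum$). Then I would fix enumerations $\cA=\{A_\alpha:\alpha<\continuum\}$ and $(0,1)=\{r_\alpha:\alpha<\continuum\}$ and define
$$\delta(A)=\sup\{r_\alpha:\ A_\alpha\cap(A+k)\in\I^+\text{ for some }k\in\Z\},$$
with the convention $\sup\emptyset=0$.

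That $\delta$ is an abstract upper density would be checked almost verbatim as in Theorem~\ref{thm:I-AD-of-cardinality-continuum-implies-abstract-upper-density}: $\delta(\N)=1$ because $A_\alpha\cap(\N+0)=A_\alpha\in\I^+$ for every $\alpha$; $\delta(F)=0$ for finite $F$ because each $F+k$ is finite, hence in $\I$; monotonicity because $A\subseteq B$ forces $A+k\subseteq B+k$; and subadditivity because $(A\cup B)+k=(A+k)\cup(B+k)$ and $\I$ is closed under finite unions, so $A_\alpha\cap((A\cup B)+k)\in\I^+$ implies $A_\alpha\cap(A+k)\in\I^+$ or $A_\alpha\cap(B+k)\in\I^+$. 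For richness I would show $\delta(A_\alpha)=r_\alpha$: the inequality ``$\geq$'' comes from taking $k=0$, while ``$\leq$'' is precisely the $\I$-TAD property, which gives $A_\beta\cap(A_\alpha+k)\in\I$ for every $\beta\neq\alpha$ and every $k$, so the supremum defining $\delta(A_\alpha)$ ranges over the single value $r_\alpha$; together with $\delta(\emptyset)=0$, $\delta(\N)=1$ and $(0,1)=\{r_\alpha:\alpha<\continuum\}$ this yields richness.

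The two genuinely new points are translation invariance and the equality $\cZ_\delta=\I$, and both rest on one elementary observation: for all $m,k\in\Z$ the sets $(A+m)+k$ and $A+(m+k)$ have finite symmetric difference (the only discrepancy comes from truncating to $\N$), and similarly the sets $(A\cap(B+k))-k$ and $(A-k)\cap B$ differ by a finite set; since $\I\supseteq\fin$ is an ideal, membership of such a set in $\I$ — equivalently in $\I^+$ — is unchanged under finite modifications. Granting this, translation invariance is immediate: the condition ``$A_\alpha\cap((A+m)+k)\in\I^+$ for some $k$'' is equivalent to ``$A_\alpha\cap(A+(m+k))\in\I^+$ for some $k$'', and as $k$ runs over $\Z$ so does $m+k$, so the set defining $\delta(A+m)$ equals the one defining $\delta(A)$. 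For $\cZ_\delta=\I$: if $A\in\I$ then, by translation invariance of $\I$, $A_\alpha\cap(A+k)\subseteq A+k\in\I$ for all $\alpha,k$, so $\delta(A)=0$; conversely, if $A\in\I^+$ but $A_\alpha\cap(A+k)\in\I$ held for all $\alpha,k$, then using translation invariance of $\I$ and the observation above one also gets $A\cap(A_\alpha+k)\in\I$ for all $\alpha,k$, so $\cA\cup\{A\}$ would be an $\I$-TAD family strictly containing $\cA$ (note $A\notin\cA$, since otherwise $A_\alpha\cap(A+0)=A\in\I^+$ for the relevant $\alpha$), contradicting maximality; hence $A_\alpha\cap(A+k)\in\I^+$ for some $\alpha,k$ and $\delta(A)\geq r_\alpha>0$. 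I expect the main obstacle to be purely bookkeeping: keeping the truncation $A+k=(A+k)\cap\N$ under control so that the ``$\exists k$'' quantifier really produces a translation invariant condition, and checking that the would-be extension $\cA\cup\{A\}$ satisfies the $\I$-TAD requirement for its new pairs taken in \emph{both} orders.
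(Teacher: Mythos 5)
Your proposal is correct and follows essentially the same route as the paper's own proof: the same extension to a maximal $\I$-TAD family, the same definition $\delta(A)=\sup\{r_\alpha: A_\alpha\cap(A+k)\in\I^+\text{ for some }k\in\Z\}$, and the same verifications. Your extra care about the truncation $(A+m)+k$ versus $A+(m+k)$ and about checking the TAD condition in both orders in the maximality step addresses details the paper's proof passes over silently, but does not change the argument.
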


\begin{proof}
Using Zorn's lemma, it is not difficult to extend $\I$-TAD family to a maximal (with respect to the inclusion) $\I$-TAD family.
Let $\cA$ be a maximal $\I$-TAD family of cardinality $\continuum$.
Let $\cA=\{A_\alpha:\alpha<\continuum\}$ and $(0,1)=\{r_\alpha:\alpha<\continuum\}$.
We define $\delta:\cP(\N)\to[0,1]$ by 
$$\delta(A) = \sup\{r_\alpha: A_\alpha\cap (A+k)\in \I^+\text{ for some $k\in \Z$}\}$$
with the convention that $\sup\emptyset=0$.

\smallskip

First, we show that $\delta$ is an abstract upper density.

For every $\alpha<\continuum$ we have $A_\alpha\cap (\N+0)=A_\alpha\in \I^+$, so $\delta(\N) = \sup (0,1) = 1$.

For every $\alpha<\continuum$, $k\in \Z$ and a finite set $F\subseteq\N$, we have $A_\alpha\cap (F+k) \subseteq F+k \in \I$, so $\delta(F) = \sup \emptyset  = 0$.

If $A\subseteq B$, then $A_\alpha\cap (A+k)\in\I^+$ implies $A_\alpha\cap (B+k)\in\I^+$. Thus $\delta(A)\leq \delta(B)$.

If $A_\alpha\cap ((A\cup B)+k)\in\I^+$, then 
$A_\alpha\cap (A+k)\in\I^+$
or
$A_\alpha\cap (B+k)\in\I^+$.
Thus 
$\delta(A\cup B)\leq \delta(A)+\delta(B)$.

\smallskip

Second, we show that $\delta$ is translation invariant.

If $A_\alpha\cap ((A+m)+k)\in\I^+$ then 
$A_\alpha\cap (A+(m+k))\in\I^+$ and vice versa. Thus $\delta(A+m) = \delta(A)$.

\smallskip

Now we show that $\delta$ is rich.

It is enough to show that $\delta(A_\alpha)=r_\alpha$ for every $\alpha<\continuum$.
Since $\cA$ is an $\I$-TAD family, $A_\beta\cap (A_\alpha+k)\in \I$ for every $k\in \Z$ and $\alpha\neq\beta$. On the other hand, $A_\alpha\cap (A_\alpha+0) = A_\alpha\in\I^+$.
Thus $\delta(A_\alpha) = \sup\{r_\alpha\}=r_\alpha$.

\smallskip

Finally, we show that $\cZ_\delta=\I$.

If $A\in \I$, then $A_\alpha\cap (A+k) \subseteq A+k\in \I$ for every $\alpha<\continuum$ and $k\in \Z$. Thus $\delta(A) = \sup\emptyset=0$.

If $A\notin \I$, then, by maximality of $\cA$, there is $\alpha<\continuum$ and $k\in \Z$ with $A_\alpha\cap (A+k)\in\I^+$. Thus $\delta(A) \geq r_\alpha>0$.
\end{proof}

\begin{theorem}
\label{thm:I-TAD-of-cardinality-continuum}
Let $\I$ be an ideal on $\N$ with the Baire property.
There is an $\I$-TAD family of cardinality $\continuum$.
\end{theorem}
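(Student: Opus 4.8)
The plan is to exploit the combinatorial characterization of ideals with the Baire property given by Talagrand's theorem (Theorem~\ref{thm:talagrand-characterization}): there is an increasing sequence $k_1<k_2<\dots$ such that any set $A$ containing infinitely many of the blocks $I_n:=[k_n,k_{n+1})\cap\N$ belongs to $\I^+$. So I would fix such a sequence, set $I_n=[k_n,k_{n+1})\cap\N$, and reduce the problem to constructing a family $\cA$ of subsets of $\N$, each of which contains infinitely many blocks $I_n$ (hence lies in $\I^+$), and which is ``translation almost disjoint'' in the required sense. The key point is that if $A$ and $B$ are each unions of blocks, then $A\cap(B+k)$ is controlled by how the block-structures of $A$ and of $B+k$ overlap, and a shift by a fixed $k$ only perturbs block boundaries by a bounded amount while the block lengths $k_{n+1}-k_n$ can be taken to grow; so for a good choice of the sets of indices, $A\cap(B+k)$ will contain no full block and in fact will be finite (hence in $\I$, since $\fin\subseteq\I$).

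Concretely, first I would thin out the sequence $(k_n)$ if necessary so that the block lengths $\ell_n:=k_{n+1}-k_n$ are strictly increasing and $\ell_n\to\infty$; replacing $(k_n)$ by a subsequence only strengthens the Talagrand property. Then I would pass from $\N$ to the index set of blocks: identify each block $I_n$ with the natural number $n$, so that a subset $S\subseteq\N$ of indices corresponds to the set $\widehat S=\bigcup_{n\in S}I_n\subseteq\N$. Take an almost disjoint family $\cS$ of infinite subsets of $\N$ of cardinality $\continuum$ (which exists by \cite[Lemma~9.21]{MR1940513}), and consider $\cA=\{\widehat S:S\in\cS\}$. Each $\widehat S$ contains infinitely many blocks, so $\widehat S\in\I^+$ by Talagrand's theorem. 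It remains to check the translation condition: for distinct $S,T\in\cS$ and $k\in\Z$, I claim $\widehat S\cap(\widehat T+k)\in\fin\subseteq\I$. Indeed, a point $x\in\widehat S\cap(\widehat T+k)$ lies in some block $I_n$ with $n\in S$, and $x-k$ lies in some block $I_m$ with $m\in T$; since $|k|$ is fixed and the gaps between consecutive relevant block-endpoints eventually exceed $|k|$, for large $x$ this forces $m=n$ or $m=n\pm1$, and since $S\cap T$ is finite the pair $(n,n)$ occurs only finitely often; the ``off-by-one'' possibilities $m=n\pm1$ contribute, for each such coincidence, only the at most $|k|$ boundary points of a block, and I would argue these too occur only finitely often (or bound their total contribution) by using that $\ell_n\to\infty$ together with the almost-disjointness of $S$ and $T$ — the precise bookkeeping here is the one routine computation I would carry out in full.

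The main obstacle I anticipate is exactly this last translation bookkeeping: almost disjointness of the index sets $S,T$ handles coincidences $I_n\cap(I_n+k)$ directly, but a shift can make a point of $I_n$ land in $I_{n+1}+k$ or $I_{n-1}+k$, so one must also rule out (or render finite) the ``neighboring-block'' overlaps, which are not controlled by $S\cap T$ being finite alone. I expect this is resolved by choosing $\cS$ a bit more carefully — for instance taking $S,T$ to be not just almost disjoint but such that $S\cap(T+1)$ and $S\cap(T-1)$ are also finite; such a family of size $\continuum$ still exists (e.g.\ take an almost disjoint family inside the even numbers, so that neighboring indices of one set cannot be indices of another). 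With that refinement in place, for all but finitely many $n\in S$ neither $n-1$, $n$, nor $n+1$ lies in $T$, and then $\widehat S\cap(\widehat T+k)$ is contained in a finite union of blocks, hence finite, so $\widehat S\cap(\widehat T+k)\in\fin\subseteq\I$. This gives $\cA$ as an $\I$-TAD family of cardinality $\continuum$, completing the proof; combined with Theorem~\ref{thm:I-TAD-of-cardinality-continuum-implies-abstract-upper-density} it yields a translation invariant rich abstract upper density $\delta$ with $\cZ_\delta=\I$ for every translation invariant ideal $\I$ with the Baire property.
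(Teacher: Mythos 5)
Your proposal is correct and follows essentially the same route as the paper's proof: apply Talagrand's characterization, thin the sequence so the block lengths tend to infinity, and index blocks by a $\fin$-almost disjoint family restricted to even positions (the paper writes this as $C_A=\bigcup_{n\in A}I_{2n}$), which is precisely your fix for the ``neighboring-block'' overlaps under a bounded shift. The final bookkeeping you outline is exactly the paper's computation, so no gap remains.
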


\begin{proof}
By Theorem~\ref{thm:talagrand-characterization} there is 
an increasing sequence $k_1<k_2<\dots$ such that 
if there are infinitely many $n$ with $[k_n,k_{n+1})\cap\N\subseteq A$, then $A\in \I^+$.
Without loss of generality we can assume that 
$\lim_{n\to\infty} (k_{n+1}-k_n)=\infty$.

Let us denote $I_{n} = [k_n,k_{n+1})\cap\N$ and $|I_n| = k_{n+1}-k_n$ for every $n\in\N$.

Let $\cA\subseteq \cP(\N)$ be a $\fin$-AD family of cardinality $\continuum$.
For every $A\in \cA$ we define 
$$C_A = \bigcup_{n\in A} I_{2n}.$$
We claim that $\cC = \{C_A:A\in \cA\}$ is the required family.

\smallskip

First, we show that $C_A\in \I^+$ for every $A\in\cA$.

If $A\in \cA$, then $A$ is infinite, so $C_A$ contains infinitely many intervals $[k_n,k_{n+1})\cap\N$, thus $C_A\in\I^+$.

\smallskip

Second, we show that $\cC$ is of cardinality $\continuum$.

If $A,B\in \cA$ are distinct, then there is $n\in (A\setminus B)\cup(B\setminus A)$, so $I_{2n}\subseteq (C_A\setminus C_B)\cup (C_B\setminus C_A)$.
Thus $|\cC|=|\cA|=\continuum$.

\smallskip

Finally, we show that $\cC$ is $\I$-TAD.

Let $A,B\in \cA$ be distinct and $k\in \Z$.
We have to show that $C_A\cap (C_B+k)\in \I$.

Let $N$ be such that $|I_n|>|k|$ for every $n>N$.
Then $I_{2n}+k\subseteq I_{2n-1}\cup I_{2n}\cup I_{2n+1}$ for every $n>N$.
Hence $(I_{2n}+k)\cap I_{2m}=\emptyset$ for any $n>N$, $m\in\N$, $n\neq m$.

Since 
$$
C_A\cap (C_B+k) 
= 
\bigcup_{m\in A} I_{2m}\cap\left(\bigcup_{n\in B} I_{2n}+k\right)
=
\bigcup_{m\in A} \bigcup_{n\in B} (I_{2m}\cap (I_{2n}+k))
$$
and $A\cap B$ is finite, we obtain that $C_A\cap (C_B+k)$ is contained in finitely many intervals $I_{2m}$. Thus $C_A\cap (C_B+k)\in \fin \subseteq \I$.
\end{proof}

\begin{theorem}
\label{thm:TA-rich-abstract-upper-density}
Let $\I$ be a translation invariant ideal on $\N$ with the Baire property.
There is a translation invariant and rich abstract upper density $\delta$ such that $\cZ_\delta=\I$.
\end{theorem}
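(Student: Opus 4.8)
The plan is to deduce this theorem directly from the two results immediately preceding it, exactly mirroring how Theorem~\ref{thm:rich-abstract-upper-density} was obtained from Theorem~\ref{thm:I-AD-of-cardinality-continuum-implies-abstract-upper-density}. In other words, I expect the proof to be essentially a one-line combination: produce the combinatorial object (an $\I$-TAD family of size $\continuum$), then feed it into the construction machine that turns such a family into the desired density.

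Concretely, I would proceed as follows. First, since $\I$ has the Baire property, Theorem~\ref{thm:I-TAD-of-cardinality-continuum} gives an $\I$-TAD family of cardinality $\continuum$; note that this step does not even use translation invariance of $\I$. Second, since $\I$ is moreover translation invariant, the hypotheses of Theorem~\ref{thm:I-TAD-of-cardinality-continuum-implies-abstract-upper-density} are met, and that theorem delivers a translation invariant and rich abstract upper density $\delta$ with $\cZ_\delta=\I$. That completes the argument.

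I do not anticipate a genuine obstacle at this point, because all the real work has already been done: the delicate part is the construction in Theorem~\ref{thm:I-TAD-of-cardinality-continuum}, which extracts, via Talagrand's characterization (Theorem~\ref{thm:talagrand-characterization}), a block sequence $k_1<k_2<\dots$ with $k_{n+1}-k_n\to\infty$ and then spreads a $\fin$-AD family across the even-indexed blocks so that finite translates cannot create large intersections; and the verification in Theorem~\ref{thm:I-TAD-of-cardinality-continuum-implies-abstract-upper-density} that the supremum formula $\delta(A)=\sup\{r_\alpha: A_\alpha\cap(A+k)\in\I^+\text{ for some }k\in\Z\}$ is monotone, subadditive, translation invariant, rich, and has null ideal exactly $\I$. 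The only thing to double-check is that the two statements fit together without a gap — that the family produced in Theorem~\ref{thm:I-TAD-of-cardinality-continuum} is literally an $\I$-TAD family in the sense of the definition used by Theorem~\ref{thm:I-TAD-of-cardinality-continuum-implies-abstract-upper-density} — which it is, so the proof is immediate.
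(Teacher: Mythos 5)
Your proposal is correct and matches the paper's own proof exactly: the paper also derives this theorem by combining Theorem~\ref{thm:I-TAD-of-cardinality-continuum} (which produces an $\I$-TAD family of cardinality $\continuum$ from the Baire property) with Theorem~\ref{thm:I-TAD-of-cardinality-continuum-implies-abstract-upper-density} (which converts such a family into the desired density, using translation invariance of $\I$). No gap here.
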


\begin{proof}
Follows from Theorems~\ref{thm:I-TAD-of-cardinality-continuum-implies-abstract-upper-density} and \ref{thm:I-TAD-of-cardinality-continuum}.
\end{proof}

\begin{corollary}[{Di Nasso and Jin~\cite[Theorems~2.1 and 2.2]{MR3863054}}]
Let $\I$ be a summable ideal.
There is a translation invariant and rich abstract upper density $\delta$ such that $\cZ_\delta=\I$.
\end{corollary}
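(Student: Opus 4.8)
The plan is to derive this directly from Theorem~\ref{thm:TA-rich-abstract-upper-density}; the only thing that needs checking is that a summable ideal satisfies that theorem's hypotheses, namely that it is translation invariant and has the Baire property.

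First I would recall from Section~\ref{sec:AbstractDensitiesIdeals} that a summable ideal $\I_f$, determined by a non-increasing $f\colon\N\to[0,\infty)$ with $\sum_{n=1}^\infty f(n)=\infty$, is translation invariant. This is essentially immediate: for $A\in\I_f$ and $k\in\Z$, monotonicity of $f$ gives $\sum_{n\in(A+k)\cap\N} f(n)\le \sum_{n\in A} f(n)+|k|\,f(1)<\infty$ (the extra term coming from telescoping the at most $|k|$ ``shift'' differences $f(m)-f(m+1)$), so $A+k\in\I_f$.

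Second, I would note that $\I_f$ is a Borel subset of the Cantor space $\{0,1\}^\N$ --- in fact it is $F_{\sigma\delta}$ --- as recorded in Example~\ref{example:ideals-with(out)-BP}, and hence it has the Baire property.

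Finally, applying Theorem~\ref{thm:TA-rich-abstract-upper-density} to $\I=\I_f$ produces a translation invariant and rich abstract upper density $\delta$ with $\cZ_\delta=\I_f$, which is exactly the assertion. There is no genuine obstacle at this stage: all the substantive work has already been carried out, in Theorem~\ref{thm:TA-rich-abstract-upper-density}, which itself relies on Talagrand's characterization (Theorem~\ref{thm:talagrand-characterization}) to build an $\I$-TAD family of cardinality $\continuum$ (Theorem~\ref{thm:I-TAD-of-cardinality-continuum}) and then feeds this family into the general construction of Theorem~\ref{thm:I-TAD-of-cardinality-continuum-implies-abstract-upper-density}.
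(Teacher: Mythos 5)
Your proposal is correct and follows exactly the paper's own route: verify that a summable ideal is translation invariant and has the Baire property (being Borel), then invoke Theorem~\ref{thm:TA-rich-abstract-upper-density}. The only quibble is cosmetic: summable ideals are in fact $F_\sigma$, not merely $F_{\sigma\delta}$, but this does not affect the argument.
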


\begin{proof}
It is not difficult to check that summable ideals are translation invariant, and it is known that summable ideals have the Baire property (see e.g.~\cite[Example~1.2.3(c)]{MR1711328}). Thus Theorem~\ref{thm:TA-rich-abstract-upper-density} finishes the proof.
\end{proof}

\begin{question}
\label{q:TAD-ideal-withoutBP}
Does there exist a translation invariant  ideal $\I$ without the Baire property such that 
there is an $\I$-TAD family of cardinality $\continuum$?
\end{question}

\begin{definition}
Let $\I$ be an ideal on $\N$. We say that a sequence of reals $\langle x_n\rangle$ is \emph{$\I$-convergent to $L$} if 
$\{n\in\N: |x_n-L|\geq \varepsilon\}\in\I$
for every $\varepsilon>0$. The number $L$ is called \emph{an $\I$-limit of $\langle x_n\rangle$} 
and we denote it by $\lim^{\I} x_n$ .
\end{definition}

\begin{theorem}
\label{thm:nonTAD-ideal-with-rich-TA-density}
There exists a translation invariant  ideal $\I$ for which there is a rich translation invariant abstract upper density $\delta$ with $\cZ_\delta = \I$, but there is no $\I$-TAD family of cardinality $\continuum$.
\end{theorem}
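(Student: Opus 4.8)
The plan is to exhibit a concrete translation invariant ideal $\I$ together with an explicit rich translation invariant density $\delta$ with $\cZ_\delta=\I$, and then to rule out even \emph{uncountable} $\I$-AD families by a short packing estimate; since every $\I$-TAD family is in particular an $\I$-AD family, this proves the theorem (and, by Theorem~\ref{thm:I-TAD-of-cardinality-continuum}, such an $\I$ must automatically fail the Baire property, which is consistent with the statement being a genuine strengthening of Theorem~\ref{thm:TA-rich-abstract-upper-density}). Concretely, I would fix a partition of $\N$ into consecutive finite intervals $I_1,I_2,\dots$ with $|I_n|\to\infty$, fix a maximal ideal $\cM$ on $\N$ (so that, $\cM$ being maximal, every bounded real sequence is $\cM$-convergent), and define
\[
\delta(A)=\lim{}^{\cM}\frac{|A\cap I_n|}{|I_n|},\qquad A\subseteq\N,
\]
setting $\I=\cZ_\delta=\{A\subseteq\N:\delta(A)=0\}$.

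The verification that $\delta$ is a rich translation invariant abstract upper density is routine and forms the first block of the proof. Normalization and vanishing on finite sets are immediate ($|F\cap I_n|/|I_n|\to 0$ for finite $F$); monotonicity and subadditivity follow from the pointwise inequalities $|A\cap I_n|\le|B\cap I_n|$ (for $A\subseteq B$) and $|(A\cup B)\cap I_n|\le|A\cap I_n|+|B\cap I_n|$ together with monotonicity and additivity of $\cM$-limits on bounded sequences. Translation invariance of $\delta$ comes from $\bigl||(A+k)\cap I_n|-|A\cap I_n|\bigr|\le 2|k|$ (the intervals $I_n$ and $I_n-k$ differ in at most $2|k|$ points), which forces $|(A+k)\cap I_n|/|I_n|-|A\cap I_n|/|I_n|\to 0$, so the two $\cM$-limits coincide; by Example~\ref{exm:denisties-ideals} the ideal $\I=\cZ_\delta$ is then translation invariant as well. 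For richness, given $r\in[0,1]$ I would let $A_r$ consist inside each $I_n$ of the first $\lceil r|I_n|\rceil$ points of $I_n$, so that $|A_r\cap I_n|/|I_n|\to r$ and hence $\delta(A_r)=r$.

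The substantive step is to show $\I$ has no uncountable $\I$-AD family. Assume $\cA\subseteq\I^+$ is uncountable with $A\cap B\in\I$ for all distinct $A,B\in\cA$. For each $A\in\cA$ we have $\delta(A)>0$, so there is a rational $q_A\in(0,\delta(A))$ with $\{n:|A\cap I_n|>q_A|I_n|\}\in\cM^*$; by pigeonhole there is a single rational $q>0$ with $S_A:=\{n:|A\cap I_n|>q|I_n|\}\in\cM^*$ for uncountably many $A\in\cA$, and among those I would pick distinct $A_0,\dots,A_M$ with $M:=\lceil 2/q^2\rceil$. Since $A_i\cap A_j\in\I$ for $i\ne j$, each set $\{n:|A_i\cap A_j\cap I_n|\le q^2|I_n|/2\}$ lies in $\cM^*$; intersecting these finitely many sets with $\bigcap_i S_{A_i}$ gives a set in $\cM^*$, hence nonempty, so we may choose one index $n$ in it. Writing $L=|I_n|$ and $A'_i=A_i\cap I_n$, we get $|A'_i|>qL$ for every $i$ and $|A'_i\cap A'_j|\le q^2L/2$ for $i\ne j$, whence by Cauchy--Schwarz applied to $g=\sum_{i=0}^M \mathbf 1_{A'_i}$ on $I_n$,
\[
\bigl((M+1)qL\bigr)^2<\Bigl(\sum_{i=0}^M|A'_i|\Bigr)^2\le L\sum_{i,j=0}^M|A'_i\cap A'_j|\le L\Bigl((M+1)L+(M+1)M\,q^2L/2\Bigr),
\]
which reduces to $Mq^2/2<1-q^2<1$, i.e.\ $M<2/q^2$, contradicting the choice of $M$. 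Hence $\I$ has no uncountable $\I$-AD family, so a fortiori no $\I$-TAD family of cardinality $\continuum$, and the theorem follows.

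The one part that requires an idea rather than bookkeeping is precisely this dispersion bound: recognizing that the number of subsets of a single block $I_n$ that are simultaneously of relative size $>q$ and pairwise of relative intersection $\le q^2/2$ is at most $O(1/q^2)$, independently of $|I_n|$, and that $\I$-AD-ness forces exactly such a configuration to occur inside a $\cM^*$-large family of blocks. Everything else — that $\delta$ is a density, that it is translation invariant and rich, and that $\cZ_\delta=\I$ — is straightforward manipulation of $\cM$-limits.
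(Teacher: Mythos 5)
Your proposal is correct and follows the same basic strategy as the paper: take an ultrafilter (maximal-ideal) limit of finite densities to get a rich, translation invariant $\delta$, set $\I=\cZ_\delta$, and then show $\I$ admits no \emph{uncountable} $\I$-AD family (which suffices, since every $\I$-TAD family is $\I$-AD). The differences are in the details. The paper uses the Ces\`{a}ro densities $|A\cap\{1,\dots,n\}|/n$ and cites the literature for the fact that the resulting $\delta$ is a translation invariant finitely additive measure; you use block densities $|A\cap I_n|/|I_n|$ and verify translation invariance by hand via the $2|k|$ boundary estimate, which makes the construction self-contained. For the final step the paper exploits finite additivity directly: if uncountably many $A$ satisfy $\delta(A)\geq 1/n$ and pairwise $\delta(A\cap B)=0$, then $n+1$ of them have union of measure $>1$, a contradiction. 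Your Cauchy--Schwarz packing bound inside a single block of an $\cM^*$-large set of indices accomplishes the same thing but is more work than necessary: your $\delta$ is itself finitely additive (ultrafilter limits are linear and $|(A\cup B)\cap I_n|=|A\cap I_n|+|B\cap I_n|$ for disjoint $A,B$), so the paper's three-line additivity argument applies verbatim to your construction. The dispersion estimate is a nice, more robust alternative --- it would survive for subadditive $\delta$ that are not additive --- but here it is not the place where an ``idea'' is required.
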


\begin{proof}
Let $\J$ be a maximal ideal on $\N$.
It is known  (see e.g.~\cite[p.~3314]{MR1845008}) 
that 
$$\delta(A) =  {\lim}^{\J} \frac{|A\cap\{1,\dots,n\}|}{n}$$
is a translation invariant finitely additive measure (in particular, $\delta$ is a translation invariant abstract upper density).

Moreover, $\delta$ is rich. Indeed, it is enough to observe (see e.g.~\cite[p.~216]{MR2040266}) that for every $r\in [0,1]$ there is a set $A\subseteq \N$ such that the asymptotic density of $A$ equals $r$ i.e.~$r = \lim_{n\to\infty} \frac{|A\cap \{1,\dots,n\}|}{n}$.

Let $\I=\cZ_\delta$. Since $\delta$ is a translation invariant abstract upper density, $\I$ is a translation invariant ideal.  
To finish the proof we have to show that there is no $\I$-AD family of cardinality $\continuum$ (in fact we show that there is no uncountable $\I$-AD family).
Suppose $\cA\subseteq\I^+$ is an uncountable $\I$-AD family. 
For every $A\in \cA$, let  $n_A\in \N$ be such that $\delta(A)\geq 1/n_A$. 
Since $\cA$ is uncountable, there are infinitely many sets $A_1,A_2,\dots\in \cA$ with $n_{A_i}=n_{A_j}$ for every $i,j\in\N$.
Let $n=n_{A_1}$.
Since $A_j\cap A_j\in \I$ for distinct $i,j\in \N$, $\delta(A_i\cap A_j)=0$.
Since $\delta$ is a finitely additive measure,
$\delta(A_i\cup A_j) = \delta(A_i)+\delta(A_j)-\delta(A_i\cap A_j) = \delta(A_i)+\delta(A_j)$ for distinct $i,j\in \N$.
Finally,
$1\geq \delta(A_1\cup\dots \cup A_{n+1})= \delta(A_1)+\dots+\delta(A_{n+1}) \geq (n+1)/n>1$, a contradiction.
\end{proof}

We do not know if there is any translation invariant ideal such that there is no rich translation invariant abstract upper density $\delta$ with $\cZ_\delta = \I$.

\begin{question}
\label{q:Tideal-without-density}
Does there exist a translation invariant  ideal $\I$ such that 
there is no rich translation invariant abstract upper density $\delta$ with $\cZ_\delta = \I$?
\end{question}

Keeping Theorem~\ref{thm:maximl-no-rich-aud} in mind, it is natural to examine T-maximal ideals in  the quest for a solution of the above question. 

\begin{question}
Does there exist a T-maximal ideal $\I$ such there is a rich translation invariant abstract upper density $\delta$ with $\cZ_\delta = \I$?
\end{question}

In Theorems~\ref{thm:T-max-is-not-TADideal} and \ref{thm:T-max-is-not-maximal-ideal-limit}, 
we show that if the above question had positive answer, then the required density would be different from densities constructed in the proofs of Theorems~\ref{thm:I-TAD-of-cardinality-continuum-implies-abstract-upper-density} and \ref{thm:nonTAD-ideal-with-rich-TA-density}.

\begin{theorem}
\label{thm:T-max-is-not-TADideal}
If $\I$ is a T-maximal  ideal, then for any $A,B\notin \I$ there is $k$ with $(A+k)\cap B\notin \I$.
In particular, there is no $\I$-TAD family of cardinality $\continuum$.
\end{theorem}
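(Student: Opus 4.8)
The plan is to argue by contradiction from the maximality of $\I$ among translation invariant ideals. Suppose $A,B\notin\I$ but $(A+k)\cap B\in\I$ for every $k\in\Z$; I will manufacture a translation invariant ideal properly extending $\I$. The candidate is the ideal $\J$ generated by $\I$ together with all translates of $A$, namely
\[
\J=\left\{X\subseteq\N: X\subseteq I\cup(A+k_1)\cup\dots\cup(A+k_m)\ \text{for some}\ I\in\I,\ m\in\N,\ k_1,\dots,k_m\in\Z\right\}.
\]
Trivially $\I\subseteq\J$ and $A\in\J$, while $A\notin\I$ by hypothesis, so $\I\subsetneq\J$ as soon as we know $\J$ is a bona fide (proper) ideal.

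First I would check the ideal axioms for $\J$. Downward closure and closure under finite unions are immediate (for a union, concatenate the two witnessing finite lists of shifts), and $\J$ contains every finite set because $\I$ does. For translation invariance, if $X\subseteq I\cup\bigcup_{i\le m}(A+k_i)$ with $I\in\I$, then for $j\in\Z$ the set $X+j=(X+j)\cap\N$ is contained in $(I+j)\cup\bigcup_{i\le m}\bigl(A+(k_i+j)\bigr)$ modulo a bounded — hence finite — set of integers, the only discrepancy being the one forced by the convention $A+k=(A+k)\cap\N$; since $\I$ is translation invariant it contains $I+j$, and it absorbs the finite remainder, so $X+j\in\J$.

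The one substantive point is that $\J$ is \emph{proper}, i.e.\ $\N\notin\J$, which amounts to showing that $\N\setminus\bigl((A+k_1)\cup\dots\cup(A+k_m)\bigr)\notin\I$ for every choice of shifts. This is exactly where the hypothesis enters: $B\cap\bigl((A+k_1)\cup\dots\cup(A+k_m)\bigr)=\bigcup_{i\le m}\bigl(B\cap(A+k_i)\bigr)$ is a finite union of members of $\I$ and therefore lies in $\I$; since $B\notin\I$, the set $B\setminus\bigl((A+k_1)\cup\dots\cup(A+k_m)\bigr)$ is still not in $\I$, and a fortiori neither is the larger set $\N\setminus\bigl((A+k_1)\cup\dots\cup(A+k_m)\bigr)$. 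Hence $\J$ is a translation invariant ideal with $\I\subsetneq\J$, contradicting T-maximality of $\I$; this proves the first assertion. I expect this verification that $\N\notin\J$ to be the crux of the argument — everything else is routine bookkeeping.

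For the closing clause, suppose toward a contradiction that $\cA$ is an $\I$-TAD family with (at least) two distinct members $A,B$. Then $A,B\in\I^+$, so by what was just proved (applied with the roles of $A$ and $B$ interchanged) there is $k\in\Z$ with $A\cap(B+k)\notin\I$, which flatly contradicts the defining condition of an $\I$-TAD family. Therefore every $\I$-TAD family has at most one element, so in particular none has cardinality $\continuum$.
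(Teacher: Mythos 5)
Your proof is correct and is essentially the paper's argument: the paper invokes T-maximality directly to get $k_1,\dots,k_n$ with $(A+k_1)\cup\dots\cup(A+k_n)\in\I^*$ and then concludes $B\cap(A+k_i)\notin\I$ for some $i$, which is exactly the contrapositive of your verification that the translation invariant ideal generated by $\I$ and the translates of $A$ is proper. You merely unpack that one step into an explicit construction, and your handling of the ``in particular'' clause matches what the paper leaves implicit.
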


\begin{proof}
Let $A,B\notin \I$.  Since $\I$ is maximal among all translation invariant ideals, there are $k_1,\dots,k_n\in \Z$ with $(A+k_1)\cup \dots\cup(A+k_n)\in \I^*$.
Thus, $B\cap [(A+k_1)\cup \dots\cup(A+k_n)]\notin \I$.
Hence, there is $i\leq n$ with $B\cap (A+k_i)\notin \I$.
\end{proof}

\begin{proposition}
\label{prop:intersection-of-maximal-ideals}
Let $\I$ be an ideal on $\N$.
\begin{enumerate}
\item The family 
$$t(\cI) =\{A\subseteq\N: A+k\in \cI\text{ for every $k\in \Z$}\}$$
is a translation invariant ideal.
\item 
If $\I$ does not  have the Baire property then $t(\I)$ does not have the Baire property. \label{prop:intersection-of-maximal-ideals-BP}
\end{enumerate}
\end{proposition}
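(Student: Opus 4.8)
The plan is to prove the two parts separately, with part~(1) being a routine verification and part~(2) the substantive claim. For part~(1), I would check the ideal axioms for $t(\I)$ directly. Closure under subsets: if $A \subseteq B$ with $B \in t(\I)$, then for each $k \in \Z$ we have $A + k \subseteq B + k \in \I$ (being careful that $A+k$ here means $(A+k)\cap\N$, so the inclusion still holds), hence $A+k \in \I$; thus $A \in t(\I)$. Closure under finite unions: $(A \cup B) + k = (A+k) \cup (B+k) \in \I$ whenever both $A, B \in t(\I)$. Finite sets belong to $t(\I)$ since a shift of a finite set is finite and $\fin \subseteq \I$. And $\N \notin t(\I)$: taking $k = 0$ gives $\N + 0 = \N \notin \I$. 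Finally, translation invariance of $t(\I)$: if $A \in t(\I)$ and $m \in \Z$, then for every $k \in \Z$ we have $(A+m)+k = A+(m+k) \in \I$, so $A + m \in t(\I)$. One small point to handle carefully is that the paper's convention $A+k = (A+k)\cap\N$ makes the identity $(A+m)+k = A+(m+k)$ slightly delicate for negative shifts; I would note that for the purpose of membership in $\I$ the two sets differ by at most finitely many elements, so this causes no problem, or alternatively work with $\Z$ acting on an appropriate cofinite setup.

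For part~(2), I would prove the contrapositive in its useful direction: I want to show that if $t(\I)$ has the Baire property then $\I$ has the Baire property. Suppose $t(\I)$ has the Baire property; by Theorem~\ref{thm:BP-ideal-iff-meager} it is then meager. The key observation is that $t(\I) = \bigcap_{k \in \Z} (\I - k)$, where $\I - k = \{A \subseteq \N : A + k \in \I\}$ is the preimage of $\I$ under the shift-by-$k$ map. Since each shift map on the Cantor space $\{0,1\}^\N$ is a homeomorphism (or at least continuous and open in the relevant sense) onto its image, $\I - k$ is a homeomorphic-type copy of $\I$; in particular $\I - k$ has the Baire property if and only if $\I$ does, and similarly it is meager iff $\I$ is meager. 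Now if $\I$ does \emph{not} have the Baire property, then by Sierpi\'nski's theorem (Theorem~\ref{thm:BP-ideal-iff-meager}) no $\I - k$ has the Baire property either, so $\{\I - k : k \in \Z\}$ is a countable family of ideals without the Baire property. By Plewik's theorem (Theorem~\ref{thm:plewik-countable-intersection-of-nonBP-ideals}), the intersection $\bigcap_{k \in \Z}(\I - k) = t(\I)$ does not have the Baire property — contradicting our assumption. Hence $\I$ not having the Baire property forces $t(\I)$ not to have it, which is exactly the desired implication.

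The main obstacle I anticipate is the bookkeeping around the truncation convention $A + k := (A+k)\cap\N$, which makes the shift maps only "eventually" homeomorphisms rather than literal homeomorphisms of the whole Cantor space. To make the appeal to Plewik's theorem airtight I would verify that $\I - k$, as literally defined by $\{A : (A+k)\cap\N \in \I\}$, is genuinely an ideal on $\N$ (it is: this is essentially the one-shift case of part~(1)), and that it has the Baire property iff $\I$ does. For the latter, for $k \ge 0$ the map $A \mapsto (A+k)\cap\N$ corresponds, on characteristic functions, to a shift that is a homeomorphism of $\{0,1\}^\N$ onto the clopen set of sequences vanishing on $\{1,\dots,k\}$, composed with restriction; for $k < 0$ it is an open continuous surjection with finite fibers. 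In either case Baire property and meagerness are preserved in both directions, because the symmetric difference of $\I - k$ and a shifted copy of $\I$ is controlled by a finite coordinate change, which preserves all category notions. Once this is pinned down, the rest is a direct citation of Theorems~\ref{thm:BP-ideal-iff-meager} and~\ref{thm:plewik-countable-intersection-of-nonBP-ideals}. Alternatively — and perhaps more cleanly — I would observe that $t(\I) \subseteq \I \cap (\I - 1)$ already, and it suffices to run Plewik's argument on the two-element (or countable) family $\{\I - k\}$; but since the statement only needs the negative implication, even the crude inclusion $t(\I) \subseteq \bigcap_{k}(\I-k)$ together with Plewik suffices to conclude non-Baire-property propagates upward.
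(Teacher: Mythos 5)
Your proposal is correct and follows essentially the same route as the paper: part (1) is a routine verification (the paper simply calls it ``straightforward''), and part (2) writes $t(\I)$ as the countable intersection of the shifted ideals $\{A\subseteq\N: A+k\in\I\}$, each without the Baire property, and applies Plewik's theorem (Theorem~\ref{thm:plewik-countable-intersection-of-nonBP-ideals}). Your extra care about the truncation convention and why each shifted copy inherits the failure of the Baire property fills in details the paper leaves implicit, but the underlying argument is identical.
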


\begin{proof}
(1)
Straightforward.
(\ref{prop:intersection-of-maximal-ideals-BP}) 
It follows from Theorem~\ref{thm:plewik-countable-intersection-of-nonBP-ideals}, because
$\I+k =\{A\subseteq\N:A-k\in\I\}$
is the ideal without the Baire property for every $k\in \Z$ 
and $t(\I) = \bigcap\{\I+k:k\in \Z\}$.
\end{proof}

\begin{theorem}
\label{thm:ideal-lim-density-is-not-t-of-max}
Let $\delta$ be defined as in the proof of Theorem~\ref{thm:nonTAD-ideal-with-rich-TA-density}.
Then $\cZ_\delta \neq t(\I)$ for any maximal ideal $\I$.
\end{theorem}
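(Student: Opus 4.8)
The plan is to reduce the claim to producing a single set of positive lower asymptotic density all of whose translates lie in $\I$, and then to build such a set out of intervals of growing length, so that maximality of $\I$ finishes the job.

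First I would record the sandwich $\underline{d}(A)\le\delta(A)\le\overline{d}(A)$ for every $A\subseteq\N$, where $\underline{d},\overline{d}$ denote lower and upper asymptotic density. Writing $\delta(A)={\lim}^{\J}x_n$ with $x_n=|A\cap\{1,\dots,n\}|/n$, for every $\varepsilon>0$ the set $\{n:|x_n-\delta(A)|\ge\varepsilon\}$ belongs to $\J$; since an ideal containing all finite sets contains no cofinite set, its complement $\{n:|x_n-\delta(A)|<\varepsilon\}$ is infinite, so $\delta(A)$ is an accumulation point of $(x_n)$, giving the sandwich. Two consequences: $\I_d\subseteq\cZ_\delta$ (from $\delta\le\overline{d}$), and every set of positive lower density lies outside $\cZ_\delta$ (from $\underline{d}\le\delta$). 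Since $t(\I)=\{A:A+k\in\I\text{ for every }k\in\Z\}\subseteq\I$ (take $k=0$), if $\cZ_\delta\not\subseteq\I$ we are already done; so we may assume $\cZ_\delta\subseteq\I$, hence $\I_d\subseteq\I$. It then suffices to exhibit $A\in t(\I)$ with $\underline{d}(A)>0$: such an $A$ has $\delta(A)\ge\underline{d}(A)>0$, so $A\notin\cZ_\delta$, while $A\in t(\I)$, whence $\cZ_\delta\ne t(\I)$.

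For the construction I would take $A=\bigcup_k[a_k,b_k)$, a union of consecutive half-open intervals arranged so that $\N\setminus A$ is likewise a union of intervals, the lengths of all these intervals tend to infinity, and $\underline{d}(A)>0$ and $\underline{d}(\N\setminus A)>0$ — for instance by alternating blocks and gaps of lengths $1,1,2,2,3,3,\dots$, which yields $d(A)=1/2$. The point of the growing lengths is that translation barely moves such a set: for any $k\in\Z$, the symmetric difference $A\triangle(A+k)$ meets each interval of $A$ and each interval of $A+k$ in at most $2|k|$ points near its endpoints, while the number of intervals of $A$ meeting $\{1,\dots,N\}$ is $o(N)$; hence $\overline{d}\bigl(A\triangle(A+k)\bigr)=0$, i.e. $A\triangle(A+k)\in\I_d$, and similarly $(\N\setminus A)\triangle\bigl((\N\setminus A)+k\bigr)\in\I_d$.

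Finally, since $\I$ is maximal, one of $A$, $\N\setminus A$ lies in $\I$; call it $A^{\ast}$. It is still a union of intervals of growing length with $\underline{d}(A^{\ast})>0$, and for every $k\in\Z$ we have $A^{\ast}+k\subseteq A^{\ast}\cup\bigl(A^{\ast}\triangle(A^{\ast}+k)\bigr)$, which is the union of a member of $\I$ and a member of $\I_d\subseteq\I$, so $A^{\ast}+k\in\I$. Thus $A^{\ast}\in t(\I)$, and $\delta(A^{\ast})\ge\underline{d}(A^{\ast})>0$ gives $A^{\ast}\in t(\I)\setminus\cZ_\delta$, so $\cZ_\delta\ne t(\I)$. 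The only computational point needing care is the density estimate $\overline{d}\bigl(A\triangle(A+k)\bigr)=0$; the conceptual content is the reduction via $\underline{d}\le\delta\le\overline{d}$ together with the observation that a union of intervals of growing length is translation invariant up to a density-zero set, so membership of all its translates in $\I$ is decided by maximality alone.
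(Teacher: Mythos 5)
Your proof is correct and follows essentially the same route as the paper's: both establish $\I_d\subseteq\cZ_\delta$ (the paper via the same sandwich $\underline{d}\le\delta\le\overline{d}$, used implicitly), reduce to $\I_d\subseteq\I$, take a union of intervals of growing length so that every translate differs from the set by an $\I_d$-set, and let maximality of $\I$ decide membership of the set or its complement. The only differences are cosmetic: the paper uses dyadic blocks $A=\bigcup_n(2^{2n},2^{2n+1}]$ and argues by contradiction from a single bad translate, whereas you use blocks of lengths $1,1,2,2,3,3,\dots$ and directly exhibit a witness in $t(\I)\setminus\cZ_\delta$.
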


\begin{proof}
Suppose that $\cZ_\delta = t(\I)$ for some maximal ideal $\I$. Since $t(\I)\subseteq \I$ and one could easily notice that $\I_d\subseteq\cZ_\delta$, we get $\I_d\subseteq\I$. 
Let $A=\bigcup_{n\in\N}I_{2n}$, where $I_n=(2^{n},2^{n+1}]\cap \N$. Then $\limsup_{n\to\infty}\frac{|A\cap\{1,\dots,n\}|}{n}=\frac{2}{3}$ and $\liminf_{n\to\infty}\frac{|A\cap\{1,\dots,n\}|}{n}=\frac{1}{3}$ as $\frac{|I_{2n}|}{|I_{2n-1}\cup I_{2n}|}=\frac{2}{3}$ and $\frac{|I_{2n}|}{|I_{2n}\cup I_{2n+1}|}=\frac{1}{3}$. 
Therefore, $\delta(A)\in\left[\frac{1}{3},\frac{2}{3}\right]$, thus $A\not\in\cZ_\delta$.

Now, since $\I$ is a maximal ideal, either $A\in\I$ or $\N\setminus A \in\I$. Assume that $A\in\I$ (the other case is identical since $\N\setminus A\not\in\cZ_\delta$ too). By the supposition that $\cZ_\delta = t(\I)$, we know that $A\not\in t(\I)$, so there exists $k\in\Z$ such that $A+k\not\in\I$. It follows that $B=A+k\setminus A\not\in\I$. However, for all but finitely many $n\in\N$ we have 
$$\frac{|B\cap I_n|}{|I_n|}\leq \frac{k}{2^n},$$
which tends to $0$ as $n$ tends to infinity. Thus, $B\in\I_d\subseteq \I$, a contradiction with $B\not\in\I$.
\end{proof}

\begin{theorem}\ 
\label{thm:TI-max}
\begin{enumerate}
\item If $\J$ is a T-maximal ideal then $\J = t(\I)$ for any maximal ideal $\I\supseteq \J$.\label{thm:TI-max-is-t-of-max}
\item There is a maximal ideal $\I$ such that $t(\I)$ is not a T-maximal ideal.
\item Any T-maximal  ideal does not have the Baire property.
\end{enumerate}
\end{theorem}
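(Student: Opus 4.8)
The plan is to prove the three parts of Theorem~\ref{thm:TI-max} in order, using Proposition~\ref{prop:intersection-of-maximal-ideals} and the earlier structural theorems.

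\textbf{Part (1).} Let $\J$ be a T-maximal ideal. Since $\J$ is an ideal (hence proper), it extends to a maximal ideal $\I$ by the standard Zorn's lemma argument; fix any such $\I\supseteq\J$. First I would check $t(\I)\subseteq\J$: since $\J$ is translation invariant and $t(\I)$ is a translation invariant ideal (Proposition~\ref{prop:intersection-of-maximal-ideals}(1)) containing $\J$? — no, that's the wrong direction. Instead, the key observation is that $\J$ itself is translation invariant, so for any $A\in\J$ and every $k\in\Z$ we have $A+k\in\J\subseteq\I$, which says precisely $A\in t(\I)$; hence $\J\subseteq t(\I)$. Now $t(\I)$ is a translation invariant ideal containing the T-maximal ideal $\J$, so by T-maximality $\J=t(\I)$. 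This also shows the value $t(\I)$ does not depend on which maximal $\I\supseteq\J$ we chose.

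\textbf{Part (3).} This is the cleanest consequence: given any T-maximal ideal $\J$, apply Part (1) to write $\J=t(\I)$ for some maximal ideal $\I$. Maximal ideals do not have the Baire property (Example~\ref{example:ideals-with(out)-BP}(2)), so by Proposition~\ref{prop:intersection-of-maximal-ideals}(2) the ideal $t(\I)=\J$ does not have the Baire property either.

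\textbf{Part (2).} Here I need to exhibit a maximal ideal $\I$ with $t(\I)$ \emph{not} T-maximal. By Part (1), $t(\I)$ fails to be T-maximal exactly when $t(\I)$ is not equal to $t(\I')$ for... more directly: $t(\I)$ is T-maximal iff it is maximal among translation invariant ideals, so I want $t(\I)$ to be strictly contained in some translation invariant ideal $\K\neq\J$ with $\K$ a proper ideal. The natural candidate is to build $\I$ concentrating on a ``thin'' translation-invariant set. For instance, take a partition-type construction: let $\I$ be a maximal ideal extending the asymptotic density zero ideal $\I_d$ (or extending $t(\I')$ for some other maximal $\I'$, arranging two incomparable translation invariant ideals below two different maximal ideals). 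Concretely, pick two distinct maximal ideals $\I_1,\I_2$ and consider whether $t(\I_1)=t(\I_2)$ can be forced to fail while some common translation invariant extension exists; alternatively, take $\I$ maximal with $\I_d\subseteq\I$, so $\I_d\subseteq t(\I)$ (since $\I_d$ is translation invariant), and then one must show $t(\I)$ is strictly smaller than some translation invariant proper ideal — e.g. show there is a translation invariant ideal $\K\supsetneq t(\I)$, $\K\neq\mathcal P(\N)$, perhaps $\K=t(\I')$ for a cleverly chosen second maximal ideal $\I'\supseteq t(\I)$. The main obstacle is precisely this: producing \emph{two} distinct T-maximal ideals above $t(\I)$, or equivalently showing that the maximal ideal $\I$ was chosen so that $t(\I)$ is not itself already T-maximal; this requires care because ``most'' maximal ideals, when intersected with their translates, might collapse to the same T-maximal core. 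I expect the construction to fix a set $X\in\I^+$ that is translation-invariantly small in the sense that $X+k$ stays within a controlled region, use a maximal ideal on $X$'s complement pattern, and then exhibit the strictly larger translation invariant ideal explicitly; verifying properness of that larger ideal (i.e.\ that $\N$ is not in it) is the delicate point.
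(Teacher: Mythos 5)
Your parts (1) and (3) are correct and essentially identical to the paper's argument: $\J\subseteq t(\I)$ because $\J$ is translation invariant and contained in $\I$, then T-maximality forces equality; and (3) follows from (1) together with Proposition~\ref{prop:intersection-of-maximal-ideals}(\ref{prop:intersection-of-maximal-ideals-BP}) and the fact that maximal ideals lack the Baire property. Nothing to add there.

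Part (2), however, is a genuine gap: you correctly diagnose what needs to be done (exhibit a maximal $\I$ and a proper translation invariant ideal strictly containing $t(\I)$, with properness being the delicate point), but you never actually produce the construction — your paragraph ends with ``I expect the construction to...'' and a list of candidate strategies, none of which is carried out. The missing idea is to make the maximal ideal concentrate on a single very sparse set. The paper takes $B=\{b_n:n\in\N\}$ with the gap sequence $\langle b_{n+1}-b_n\rangle$ increasing to infinity, and lets $\I$ be any maximal ideal with $\N\setminus B\in\I$. Then $B\notin\I$, hence $B\notin t(\I)$, so it suffices to show that $t(\I)\cup\{B\}$ generates a proper translation invariant ideal, i.e.\ that $\N\setminus[(B+k_1)\cup\dots\cup(B+k_n)]\notin t(\I)$ for all $k_1,\dots,k_n\in\Z$. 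The sparseness of $B$ gives $|(B+l)\cap B|\leq 1$ for $l\neq 0$; choosing $k\notin\{-k_1,\dots,-k_n\}$, the set $\bigl([(B+k_1)\cup\dots\cup(B+k_n)]+k\bigr)\cap B$ is finite, and since $\N\setminus B\in\I$ this forces $[(B+k_1)\cup\dots\cup(B+k_n)]+k\in\I$, hence its complement is $\I$-positive, which is exactly the statement that $\N\setminus[(B+k_1)\cup\dots\cup(B+k_n)]\notin t(\I)$. Without some such explicit witness (a set whose finite unions of translates remain small for $\I$), your part (2) does not go through.
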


\begin{proof}
(1)
($\subseteq$)
If $A\in \J$, then $A+k\in \J\subseteq\I $ for every $k\in \Z$. Thus, $A\in t(\I)$.

($\supseteq$)
Since $\J\subseteq t(\I)$, $t(\I)$ is translation invariant and $\J$ is maximal among all translation invariant ideals,  $\J=t(\I)$.

(2)
Let $B=\{b_n:n\in\N\}$ be such that the sequence $\langle b_{n+1}-b_n: n\in\N\rangle$ is increasing and diverging to infinity.
Let $\I$ be a maximal ideal with $\N\setminus B \in \I$.
If we show that $t(\I)\cup\{B\}$ can be extended to a translation invariant ideal, 
$t(\I)$ will not be a T-maximal ideal.

To prove that $t(\I)\cup\{B\}$ can be extended to a translation invariant ideal, we have to show that 
$\N\setminus[(B+k_1)\cup\dots\cup(B+k_n)]\notin t(\I)$
for any $k_1,\dots,k_n\in\Z$.

Let $k_1,\dots,k_n\in\Z$.
Let $k\in\Z\setminus\{-k_1,\dots,-k_n\}$.
Since $|(B+l)\cap B|\leq 1$ for every $l\neq0$, 
$\left([(B+k_1)\cup\dots\cup(B+k_n)]+k\right)\cap B$ is finite.
Since $\N\setminus B\in\I$, 
$[(B+k_1)\cup\dots\cup(B+k_n)]+k\in \I$.
Then 
$\N\setminus ([(B+k_1)\cup\dots\cup(B+k_n)]+k)\notin \I$, 
and consequently 
$(\N\setminus [(B+k_1)\cup\dots\cup(B+k_n)])+k\notin \I$ which means that
$\N\setminus[(B+k_1)\cup\dots\cup(B+k_n)]\notin t(\I)$.

(3)
Follows from (1) and Proposition~\ref{prop:intersection-of-maximal-ideals}(\ref{prop:intersection-of-maximal-ideals-BP}).
\end{proof}

\begin{theorem}
\label{thm:T-max-is-not-maximal-ideal-limit}
Let $\delta$ be defined as in the proof of Theorem~\ref{thm:nonTAD-ideal-with-rich-TA-density}.
Then $\cZ_\delta$ is not a T-maximal ideal.
\end{theorem}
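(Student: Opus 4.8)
The plan is to combine two facts already established in the excerpt. On the one hand, Theorem~\ref{thm:ideal-lim-density-is-not-t-of-max} tells us that $\cZ_\delta \neq t(\I)$ for any maximal ideal $\I$. On the other hand, Theorem~\ref{thm:TI-max}(\ref{thm:TI-max-is-t-of-max}) tells us that every T-maximal ideal $\J$ has the form $\J = t(\I)$ for some maximal ideal $\I$ (indeed for any maximal $\I \supseteq \J$, which exists by extending $\J$ to a maximal ideal). So the proof is essentially a one-line contrapositive argument.

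First I would argue by contradiction: suppose $\cZ_\delta$ is a T-maximal ideal. By Theorem~\ref{thm:TI-max}(\ref{thm:TI-max-is-t-of-max}), applied with $\J = \cZ_\delta$ and any maximal ideal $\I$ containing $\cZ_\delta$ (such $\I$ exists since every ideal extends to a maximal one), we get $\cZ_\delta = t(\I)$. But this directly contradicts Theorem~\ref{thm:ideal-lim-density-is-not-t-of-max}, which asserts $\cZ_\delta \neq t(\I)$ for every maximal ideal $\I$. Hence $\cZ_\delta$ cannot be T-maximal.

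Since this is just a citation of two prior results, there is no real obstacle to overcome here — the work has all been done in Theorems~\ref{thm:ideal-lim-density-is-not-t-of-max} and \ref{thm:TI-max}. The only point worth stating explicitly is that an arbitrary ideal (in particular $\cZ_\delta$) can be extended to a maximal ideal, so that the hypothesis of Theorem~\ref{thm:TI-max}(\ref{thm:TI-max-is-t-of-max}) (namely the existence of a maximal $\I \supseteq \J$) is indeed met; this is the standard Zorn's lemma argument and is used freely elsewhere in the paper. I would write the proof in roughly three sentences.

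\begin{proof}
Suppose, towards a contradiction, that $\cZ_\delta$ is a T-maximal ideal. Extend $\cZ_\delta$ to a maximal ideal $\I$. By Theorem~\ref{thm:TI-max}(\ref{thm:TI-max-is-t-of-max}), $\cZ_\delta = t(\I)$, which contradicts Theorem~\ref{thm:ideal-lim-density-is-not-t-of-max}. Hence $\cZ_\delta$ is not a T-maximal ideal.
\end{proof}
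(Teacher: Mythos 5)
Your proof is correct and is exactly the paper's argument: the paper's proof is the one-line statement that the result follows from Theorems~\ref{thm:ideal-lim-density-is-not-t-of-max} and \ref{thm:TI-max}(\ref{thm:TI-max-is-t-of-max}), and you have simply spelled out the same contradiction, including the (routine) point that $\cZ_\delta$ extends to a maximal ideal so that Theorem~\ref{thm:TI-max}(\ref{thm:TI-max-is-t-of-max}) applies.
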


\begin{proof}
It follows from Theorems~\ref{thm:ideal-lim-density-is-not-t-of-max} and 
\ref{thm:TI-max}(\ref{thm:TI-max-is-t-of-max}).
\end{proof}

\bigskip

The following diagram summarizes all (un)known relationships between translation invariant  ideals $\I$ and existence of rich translation invariant  abstract upper density with $\I=\cZ_\delta$.

\begin{center}
\begin{tikzpicture}
\draw (-6,-2.5) rectangle (6,2.5);

\draw[ultra thick] (0.05,-2.5) -- (0.05,3.4);
\node at (-3,3.1) {\textsc{There is a rich translation}};
\node at (-3,2.8) {\textsc{invariant density}};
\node at (3.1,3.1) {\textsc{There is no rich translation }};
\node at (3.1,2.8) {\textsc{invariant density}};

\draw[thick]  (-3.5,0) circle (2.4cm);
\node at (-4.5,-0.9) {TAD-ideals};
\node at (-4.5,-1.3) {(Thm.~\ref{thm:I-TAD-of-cardinality-continuum-implies-abstract-upper-density})};

\draw[thick]  (-3.1,0.7) circle (1.5cm);
\node at (-3,1.5) {Ideals with };
\node at (-3,1) {Baire property};
\node at (-3,0.5) {(Thm.~\ref{thm:TA-rich-abstract-upper-density})};

\draw[fill] (-2.6,-1.3) circle (0.08cm);
\node at (-2.9,-1.3) {{\Huge?}};
\node at (-2.4,-1.7) {(Q.~\ref{q:TAD-ideal-withoutBP})};

\draw[fill] (-1,-1.7) circle (0.08cm);
\node at (-1,-2) {(Thm.~\ref{thm:nonTAD-ideal-with-rich-TA-density})};

\draw[fill] (2.7,0) circle (0.08cm);
\node at (3,0) {{\Huge ?}};
\node at (2.5,-0.5) {(Q.~\ref{q:Tideal-without-density})};
\end{tikzpicture}
\end{center}

\section*{Acknowledgment}

The authors would like to thank Piotr Szuca for fruitful discussion. 


\bibliographystyle{amsplain}
\bibliography{paper}

\end{document}